\documentclass[10pt,english,reqno]{amsart}

\usepackage{amsthm,amsmath,amssymb,amsfonts}
\usepackage{graphicx}
\usepackage[font=scriptsize,skip=6pt]{caption}

\usepackage[
  pdfauthor={},
  pdfkeywords={},
  pdftitle={},
  pdfcreator={},
  pdfproducer={},
  linktocpage,colorlinks,bookmarksnumbered]{hyperref}

\setlength\paperheight{11in}
\setlength\paperwidth{8.5in}
\newdimen\papertrimheight
\newdimen\papertrimwidth
\setlength\papertrimheight{10in}
\setlength\papertrimwidth{7in}

\setlength\oddsidemargin{73.5pt}
\setlength\evensidemargin{73.5pt}
\setlength\topmargin{31pt}


\theoremstyle{plain}
\newtheorem{theorem}{Theorem}
\newtheorem{lemma}{Lemma}

\theoremstyle{definition}
\newtheorem{definition}{Definition}
\newtheorem{example}{Example}

\DeclareMathOperator{\DefEq}{\stackrel{{\rm def}}{=}}
\DeclareMathOperator{\DefEqDisp}{\hspace{0.15em}\DefEq\hspace{0.15em}}
\DeclareMathOperator{\denom}{denom}
\DeclareMathOperator{\lcm}{lcm}

\newcommand{\N}{\mathbb{N}}
\newcommand{\Z}{\mathbb{Z}}
\newcommand{\Q}{\mathbb{Q}}

\newcommand{\PSet}{\mathcal{P}}
\newcommand{\DD}{\mathfrak{D}}

\hyphenation{Ber-noulli}


\begin{document}

\title{Power-Sum Denominators}
\markright{Power-Sum Denominators}
\author{Bernd C. Kellner and Jonathan Sondow}
\address{G\"ottingen, Germany}
\email{bk@bernoulli.org}
\address{New York, USA}
\email{jsondow@alumni.princeton.edu}
\subjclass[2010]{11B68 (Primary), 11B83 (Secondary)}
\keywords{Denominator, power sum, Bernoulli numbers and polynomials, sum of base-$p$ digits}

\begin{abstract}
The {\em power sum} $1^n + 2^n + \cdots + x^n$ has been of interest to
mathematicians since classical times. Johann Faulhaber, Jacob Bernoulli, and
others  who followed expressed power sums as polynomials in $x$ of degree $n+1$
with rational coefficients. Here we consider the denominators of these
polynomials, and prove some of their properties. A remarkable one is that such
a denominator equals $n+1$ times the squarefree product of certain primes $p$
obeying the condition that the sum of the base-$p$ digits of $n+1$ is at
least~$p$. As an application, we derive a squarefree product formula for the
denominators of the Bernoulli polynomials.
\end{abstract}

\maketitle

\vspace*{-2.5em}
\begin{figure}[htbp]
  \begin{center}
    \includegraphics[width=2in]{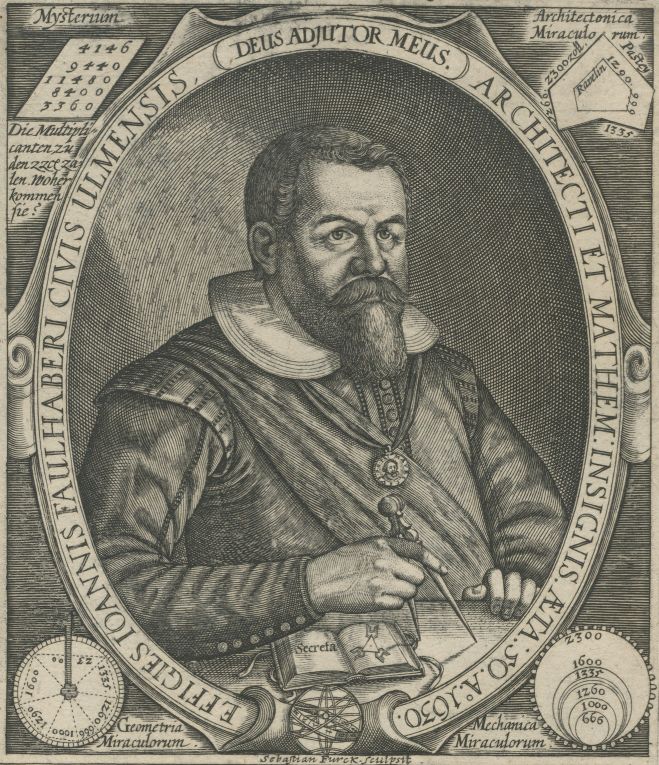}
    \caption[Cap]{Johann Faulhaber (1580--1635). \protect\footnotemark}
    \label{FIG:Faulhaber}
  \end{center}
\end{figure}
\vspace*{-2.5em}

\footnotetext{\copyright\ Copyright and used with permission by the Archive of
the City of Ulm \cite{furck}.}


\section{Introduction}
Johann Faulhaber was ``known in his day as The Great Arithmetician of Ulm''
(see \cite[p.~106]{cg}, \cite[p.~152]{stopple}). In his 1631 book
{\em Academia Algebrae}~\cite{faul1631}, Faulhaber worked out formulas
for {\em power sums} $1^n + 2^n + \dotsb + x^n$ as polynomials
in $x$ of degree $n+1$ with rational coefficients. He found that
\begin{align*}
  1^0 + 2^0 + \dotsb + x^0 &= x,\\
  1^1 + 2^1 + \dotsb + x^1 &= \frac{1}{2} (x^2+x),\\
  1^2 + 2^2 + \dotsb + x^2 &= \frac{1}{6} (2x^3+3x^2+x),\\
\intertext{}
  1^3 + 2^3 + \dotsb + x^3 &= \frac{1}{4} (x^4+2x^3+x^2),\\
  1^4 + 2^4 + \dotsb + x^4 &= \frac{1}{30} (6x^5+15x^4+10x^3-x),\\
  1^5 + 2^5 + \dotsb + x^5 &= \frac{1}{12} (2x^6+6x^5+5x^4-x^2),
\end{align*}
and so on up to $n=17$ (see~\cite{edwards, knuth, mactutor, sk};
a comprehensive survey of Faulhaber's life and mathematical work is given
in \cite{schneider1993}).
The fractions in these formulas naturally lead one to consider the denominators.

\begin{definition} \label{DEF:d_n}
For $n \geq 0$, the $n$th {\em power-sum denominator} is the smallest positive
integer $d_n$ such that $d_n \cdot (1^n + 2^n + \dotsb + x^n)$ is a polynomial
in $x$ with {\em integer} coefficients.
\end{definition}

The first few values of $d_n$ (see \cite[Sequence A064538]{oeis}) are
$$
  d_n = 1, 2, 6, 4, 30, 12, 42, 24, 90, 20, 66, 24, 2730, 420, 90, 48, 510, \dotsc.
$$

In this article, we study the power-sum denominators and related numbers and
prove some of their properties. We first collect some fairly straightforward
ones. Throughout the paper, $p$ always denotes a prime.

\begin{theorem} \label{THM:main}
The sequence of power-sum denominators $d_n$ for $n \geq 0$ has the following
properties:

\begin{enumerate}
\item $p \mid d_n \implies p \leq n+1$.

\item $d_n$ is divisible by $n+1$, and we have a squarefree quotient
   $$
     q_n \DefEq \frac{d_n}{n+1}.
   $$

\item $d_n$ is even for all $n \geq 1$, while $q_n$ is odd if and only
      if $n=2^r-1$ for some $r \geq 0$.

\item
  $
    p \mid q_n \implies p \leq M_n
    \DefEqDisp
    \begin{cases}
      \, \dfrac{n+2}{2}, &\text{if $n$ is even,} \\[0.6em]
      \, \dfrac{n+2}{3}, &\text{if $n$ is odd.}
    \end{cases}
  $
\end{enumerate}
\end{theorem}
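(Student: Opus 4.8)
The plan is to start from the Faulhaber--Bernoulli closed form
\[
  1^n+2^n+\dotsb+x^n=\frac{1}{n+1}\sum_{k=0}^{n}\binom{n+1}{k}B_k^{+}\,x^{\,n+1-k},
\]
where $B_k^{+}$ is the $k$th Bernoulli number with the convention $B_1^{+}=+\tfrac12$ (and $B_k^{+}=B_k$ for $k\neq1$, so $B_k^{+}=0$ for odd $k\ge3$). Then $d_n$ is the least common multiple of the denominators of the coefficients, and the whole theorem reduces to computing, for each prime $p$,
\[
  v_p(d_n)=\max_{0\le k\le n}\Bigl(v_p(n+1)-v_p\!\binom{n+1}{k}-v_p(B_k^{+})\Bigr),
\]
where $v_p$ is the $p$-adic valuation (the maximum is attained at a value $\ge0$, e.g.\ at $k=0$). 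Three classical facts drive everything: the von Staudt--Clausen theorem, which gives $v_p(B_k^{+})\ge-1$ with $v_p(B_k^{+})=-1$ for even $k\ge2$ exactly when $(p-1)\mid k$; the integrality $v_p\binom{n+1}{k}\ge0$; and Kummer's theorem, by which $v_p\binom{n+1}{k}$ equals the number of carries in adding $k$ to $n+1-k$ in base $p$, so it vanishes iff every base-$p$ digit of $k$ is at most the corresponding digit of $n+1$.

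Parts (i)--(iii) should then fall out quickly. For (i), a prime in the denominator must come either from the factor $1/(n+1)$, forcing $p\mid n+1$, or from a term with $v_p(B_k^{+})=-1$, which forces $(p-1)\mid k$ with $1\le k\le n$ and hence $p\le n+1$. For (ii), the $k=0$ term contributes the coefficient $1/(n+1)$, giving $(n+1)\mid d_n$, while squarefreeness of $q_n$ is immediate from the displayed valuation, since $-v_p\binom{n+1}{k}-v_p(B_k^{+})\le1$ yields $v_p(d_n)\le v_p(n+1)+1$. For (iii), the $k=1$ term equals $\tfrac12$ for every $n\ge1$, so $2\mid d_n$; and since $v_2(B_k^{+})=-1$ for every even $k\ge2$, one finds $v_2(q_n)=1$ unless no even $k$ with $2\le k\le n$ has $\binom{n+1}{k}$ odd. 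By Lucas' theorem the latter happens precisely when $n+1$ is a power of $2$, i.e.\ $n=2^r-1$, giving the stated characterization.

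The real work is part (iv). Writing $s_p(m)$ for the sum of the base-$p$ digits of $m$ and using $p\equiv1\pmod{p-1}$, I would first translate $p\mid q_n$ (for odd $p$) into the existence of an even $k$ with $2\le k\le n$, $(p-1)\mid s_p(k)$, and $k$ digitwise dominated by $n+1$ in base $p$. Since $s_p(k)$ is then a positive multiple of $p-1$, it is at least $p-1$; and digit domination together with $k\neq n+1$ forces $s_p(k)<s_p(n+1)$, whence $s_p(n+1)\ge p$. Minimizing the value of a base-$p$ number with a prescribed digit sum gives $n+1\ge2p-1$, i.e.\ the even-$n$ bound $p\le(n+2)/2$. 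For odd $n$ the integer $n+1$ is even, and because $p$ is odd we have $s_p(n+1)\equiv n+1\equiv0\pmod2$; thus $s_p(n+1)$ is an even integer $\ge p$, hence $\ge p+1$, and minimizing once more shows $n+1\ge3p-1$, i.e.\ $p\le(n+2)/3$. The prime $p=2$ is handled separately: by part (iii) one has $2\mid q_n$ only when $n\neq2^r-1$, and a direct check confirms $2\le M_n$ in each such case.

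I expect the crux to be part (iv), and within it the clean reduction of $p\mid q_n$ to the digit conditions $(p-1)\mid s_p(k)$ and digitwise domination, which is where von Staudt--Clausen and Kummer must be combined with care; once that reduction is in hand, the parity refinement $s_p(n+1)\ge p+1$ for odd $n$ and the digit-sum minimization are short. Parts (i)--(iii) should require little beyond the von Staudt--Clausen and Lucas theorems.
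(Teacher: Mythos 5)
Your proposal is correct. Parts (i)--(iii) run essentially parallel to the paper, which also expands $(n+1)S_n(x)$ via Bernoulli's formula, reads off the denominators of $\binom{n+1}{k}B_k$ from von Staudt--Clausen (Lemmas~\ref{LEM:poly-T_n} and~\ref{LEM:pset_def}, phrased with prime sets $\PSet_{m,k}$ rather than valuations), obtains $(n+1)\mid d_n$ and the squarefreeness of $q_n$ from the monic leading term together with $v_p(B_k)\ge -1$, and characterizes the parity of $q_n$ by when all $\binom{n+1}{k}$ with even $k$ are even (using the Glaisher--Fine digit count where you use Lucas --- an immaterial difference). Where you genuinely diverge is part (iv), the crux. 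The paper proves the bound $M_n$ by a purely elementary case analysis (Lemmas~\ref{LEM:pset_odd} and~\ref{LEM:pset_even}): it splits on the value of $k/(p-1)$ and, in the borderline cases, exhibits an explicit multiple of $p$ among the numerator factors of $\binom{m}{k}$, contradicting $p\nmid\binom{m}{k}$. You instead convert $p\nmid\binom{n+1}{k}$ into digitwise domination via Kummer/Lucas, deduce $s_p(n+1)\ge s_p(k)+1\ge p$, sharpen this to $s_p(n+1)\ge p+1$ for odd $n$ using $s_p(n+1)\equiv n+1\pmod 2$, and then minimize a base-$p$ integer of prescribed digit sum to get $n+1\ge 2p-1$ (resp.\ $3p-1$). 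This is sound and arguably slicker: in effect you establish the forward implication $p\mid q_n\Rightarrow s_p(n+1)\ge p$ of the paper's Theorem~\ref{THM:formula} and derive $M_n$ as a corollary, showing that the bound $M_n$ is exactly the digit-sum constraint in disguise. The trade-off is that you import the $p$-adic digit machinery into Theorem~\ref{THM:main}, which the paper deliberately defers to Section~\ref{SEC:proofs2}, keeping its proof of Theorem~\ref{THM:main} at the level of bare binomial-coefficient divisibility. Your separate treatment of $p=2$ in (iv) via part (iii) and a direct check that $2\le M_n$ in the relevant cases is exactly what is needed.
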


The first few quotients (see \cite[Sequence A195441]{oeis}) are
$$
  q_n = 1, 1, 2, 1, 6, 2, 6, 3, 10, 2, 6, 2, 210, 30, 6, 3, 30, 10, 210, 42, 330, \dotsc.
$$
Their values can be computed from the following surprising and remarkable
formula. As usual, an empty product is defined to be $1$.

\begin{theorem} \label{THM:formula}
For all $n \geq 0$, we have the prime factorization
\begin{equation} \label{EQ:prod-q_n}
  q_n = \hspace{-1.0em} \prod_{
      \substack{
        p \, \leq \, M_n\\[0.2em]
        s_p(n+1) \, \geq \, p}
      }
    \hspace{-1.0em} p,
\end{equation}
where $q_n$ and $M_n$ are as in Theorem~\ref{THM:main}, and $s_p(n)$ denotes
the sum of the base-$p$ digits of~$n$, as defined in Section~\ref{SEC:proofs2}.
Moreover, the bound $M_n$ is sharp for infinitely many even (respectively, odd)
values of $n$. In particular, the sequence $(q_n)_{n\ge0}$ is unbounded.
\end{theorem}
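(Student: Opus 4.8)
The plan is to route everything through Bernoulli numbers. Writing $S_n(x) \DefEq 1^n + 2^n + \dots + x^n$, Faulhaber's formula gives
\begin{equation*}
  (n+1)\,S_n(x) = \sum_{k=0}^{n} \binom{n+1}{k} B_k\, x^{\,n+1-k},
\end{equation*}
where $B_k$ is the $k$th Bernoulli number in the convention $B_1 = +\tfrac12$, so that $B_k = 0$ for odd $k \ge 3$ and the sum stops at $k=n$ (there is no constant term). Since $d_n$ is the least common denominator of the coefficients of $S_n(x)$, reading off the $p$-adic valuation coefficient by coefficient gives, for each prime $p$,
\begin{equation*}
  v_p(d_n) = \max_{0 \le k \le n}\Bigl(v_p(n+1) - v_p\binom{n+1}{k} - v_p(B_k)\Bigr),
\end{equation*}
where $v_p$ denotes the $p$-adic valuation. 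Subtracting $v_p(n+1)$ (Theorem~\ref{THM:main}(ii)) yields
\begin{equation*}
  v_p(q_n) = \max_{0 \le k \le n}\bigl(-v_p\binom{n+1}{k} - v_p(B_k)\bigr).
\end{equation*}
The $k=0$ term equals $0$, and since binomial coefficients are integers while von Staudt--Clausen (together with $B_1 = \tfrac12$) gives $v_p(B_k) \ge -1$ for $B_k \neq 0$ with equality iff $(p-1)\mid k$, each term is $\le 1$ and the zero values of $B_k$ contribute nothing. Hence $v_p(q_n) \in \{0,1\}$ (recovering the squarefreeness of Theorem~\ref{THM:main}(ii)), and $v_p(q_n)=1$ precisely when there is an index $k$ with $1 \le k \le n$, $B_k \neq 0$, $(p-1)\mid k$, and $p \nmid \binom{n+1}{k}$.

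The crux is to show such a $k$ exists if and only if $s_p(n+1) \ge p$, and this is the step I expect to be the main obstacle, owing to the boundary condition $k \le n$ and the anomalous behaviour of $p=2$. By Lucas' theorem $p \nmid \binom{n+1}{k}$ is equivalent to $k$ being digitwise $\le n+1$ in base $p$; and since $p \equiv 1 \pmod{p-1}$, we have $(p-1)\mid k \iff (p-1)\mid s_p(k)$. As $k$ ranges over the integers digitwise $\le n+1$, the digit sum $s_p(k)$ attains every value in $\{0,1,\dots,s_p(n+1)\}$, with the maximal value reached only by $k=n+1$, which is excluded because $k \le n$. Thus a suitable $k$ exists iff some positive multiple of $p-1$ lies in $\{1,\dots,s_p(n+1)-1\}$, i.e. iff $p-1 \le s_p(n+1)-1$, which is exactly $s_p(n+1) \ge p$. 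For odd $p$ the condition $(p-1)\mid k$ forces $k$ even, so $B_k \neq 0$ holds automatically; for $p=2$ (where $(p-1)\mid k$ is vacuous and $B_k \ne 0$ means $k=1$ or $k$ even) one checks directly that if $s_2(n+1)=1$, say $n+1 = 2^m$, the only nonzero admissible $k$ is $k=n+1$ (excluded), whereas if $s_2(n+1)\ge 2$ one may take $k = 2^j$ for a binary $1$-digit of $n+1$ at some position $j \ge 1$, giving $B_k \ne 0$ and $k < n+1$. Either way the threshold is $s_p(n+1) \ge p$. Combining this equivalence with squarefreeness and the bound $p \le M_n$ from Theorem~\ref{THM:main}(iv) gives $q_n = \prod_{p \le M_n,\ s_p(n+1) \ge p} p$, which is \eqref{EQ:prod-q_n}.

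For sharpness I would exhibit explicit families. If $p$ is any prime and $n = 2p-2$ (even), then $M_n = (n+2)/2 = p$ and $n+1 = 2p-1 = 1\cdot p + (p-1)$ in base $p$, so $s_p(n+1) = p$; hence the extreme prime $p = M_n$ divides $q_n$. If $p \ge 3$ is an odd prime and $n = 3p-2$ (odd), then $M_n = (n+2)/3 = p$ and $n+1 = 3p-1 = 2\cdot p + (p-1)$ in base $p$, so $s_p(n+1) = p+1 \ge p$; again $p = M_n$ divides $q_n$. Letting $p$ range over the infinitely many primes produces infinitely many even, respectively odd, values of $n$ at which $M_n$ is attained, establishing sharpness. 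Finally, along $n = 2p-2$ we have $q_n \ge M_n = (n+2)/2 \to \infty$, so the sequence $(q_n)_{n\ge0}$ is unbounded.
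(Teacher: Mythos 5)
Your proposal is correct, and while it relies on the same essential toolkit as the paper (Bernoulli's formula, the von Staudt--Clausen theorem, and Lucas's theorem via base-$p$ digits), its organization is genuinely leaner. The paper reaches the condition ``there exists $k$ with $(p-1)\mid k$ and $p\nmid\binom{n+1}{k}$'' through the polynomial $T_n(x)=(n+1)S_n(x)/x$, the prime sets $\PSet_{m,k}$ of Lemma~\ref{LEM:pset_def}, and the intermediate Theorem~\ref{THM:formula-eps} (whose refinement uses the Hermite--Bachmann congruence); you get the same condition in one step by reading off $v_p$ of each Faulhaber coefficient $\frac{1}{n+1}\binom{n+1}{k}B_k$, with the $B_1=+\tfrac12$ convention conveniently absorbing the $k=1$ term into the uniform statement that $v_p(B_k)=-1$ iff $(p-1)\mid k$, for $k\ge 1$ with $B_k\ne 0$. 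More notably, you prove the crux equivalence with $s_p(n+1)\ge p$ in a single stroke: since Lucas/Kummer characterizes $p\nmid\binom{n+1}{k}$ as digitwise domination of $k$ by $n+1$, and $s_p(k)$ sweeps out all of $\{0,\dots,s_p(n+1)\}$ over such $k$ with the maximum attained only at the excluded $k=n+1$, the existence of a positive multiple of $p-1$ strictly below $s_p(n+1)$ is exactly the threshold $s_p(n+1)\ge p$. The paper instead splits this into a forward direction via Legendre's valuation formula and a backward direction via an explicit greedy construction of the digits of $j(p-1)$; your range argument subsumes both, and your treatment of $p=2$ (taking $k=2^j$ at the leading bit) correctly covers the one case where $B_k\ne 0$ is not automatic. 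Your sharpness families $n=2p-2$ and $n=3p-2$ coincide with the paper's. What the paper's longer route buys is the standalone binomial-coefficient criterion of Theorem~\ref{THM:formula-eps}, which is of independent interest; what yours buys is a self-contained proof of \eqref{EQ:prod-q_n} that needs neither that theorem nor the Hermite--Bachmann congruence.
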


\pagebreak

\begin{example}
To illustrate Theorems~\ref{THM:main} and~\ref{THM:formula}, we compute the table
\begin{center}
\begin{tabular}[c]{|c|c|l|l|}\hline
  $n$ & $M_n$ & \multicolumn{1}{c|}{$q_n$} & \multicolumn{1}{c|}{$d_n$} \\\hline\hline
  $19$ &  $7$ & $2 \cdot 3 \cdot 7 = 42$  & $20 \cdot q_{19} = 840$ \\\hline
  $20$ & $11$ & $2 \cdot 3 \cdot 5 \cdot 11= 330$ & $21 \cdot q_{20} = 6930$ \\\hline
\end{tabular}
\end{center}

\noindent which depends on the values of $s_p(n+1)$ given by

\begin{center}
\begin{tabular}{|c|c|c|c|c|}\hline
  $p$       & $2$ & $3$ & $5$ & $7$ \\\hline
  $s_p(20)$ & $2$ & $4$ & $4$ & $8$ \\\hline
\end{tabular}
\quad and \quad
\begin{tabular}{|c|c|c|c|c|c|}\hline
  $p$       & $2$ & $3$ & $5$ & $7$ & $11$ \\\hline
  $s_p(21)$ & $3$ & $3$ & $5$ & $3$ & $11$ \\\hline
\end{tabular}
.
\end{center}
\end{example}

The rest of the paper is organized as follows. The next section is devoted to
preliminaries, including Bernoulli's formula for power sums and the von
Staudt--Clausen theorem on Bernoulli numbers. In Section~\ref{SEC:lemmas}
we consider some properties of the binomial coefficients and we prove five lemmas.
Section~\ref{SEC:proofs} contains the proof of Theorem~\ref{THM:main}.
In Section~\ref{SEC:properties} we use a congruence on binomial coefficients
due to Hermite and Bachmann to give another formula for the quotients $q_n$.
In Section~\ref{SEC:proofs2} we prove Theorem~\ref{THM:formula} using
$p$-adic methods, including results of Legendre and Lucas. In the final section
the theorems are applied to the denominators of the Bernoulli polynomials.


\section{Preliminaries}
In his {\em Ars Conjectandi} \cite[pp.~96--98]{bern} of 1713, Jacob Bernoulli
generalized Faulhaber's formulas, but without giving a rigorous proof. Later a
proof followed as a special case of the more general {\em Euler--Maclaurin
summation formula}, which was independently found (cf.~\cite[p.~402]{grab}) by
Euler \cite[pp.~17--18]{euler} and Maclaurin \cite[pp.~676--677]{macl} around 1735.
In modern terms, {\em Bernoulli's formula} for power sums
(see \cite[pp.~106--109]{cg}) states that for $n \geq 1$,
\begin{equation} \label{EQ:def-S_n}
  S_n(x)
  \DefEqDisp 1^n + 2^n + \dotsb + (x-1)^n
  = \frac{B_{n+1}(x) - B_{n+1}}{n+1},
\end{equation}
where the $n$th {\em Bernoulli polynomial} $B_n(x)$ is defined symbolically as
\begin{equation} \label{EQ:def-B_nx}
  B_n(x)
  \DefEqDisp (B+x)^n
  \DefEqDisp \sum_{k=0}^{n} \binom{n}{k} B_k \, x^{n-k}
\end{equation}
and the {\em Bernoulli numbers} $B_0, B_1, B_2, \dotsc$ are rational numbers
defined by the generating function
$$
  \frac{t}{e^t - 1} = \sum_{k=0}^{\infty} B_k \frac{t^k}{k!} \quad (|t| < 2\pi).
$$

It turns out that $B_n=0$ for odd $n > 1$ (see, e.g., \cite[Section 7.9]{hw}).
The sequence of nonzero Bernoulli numbers starts with
\begin{equation} \label{EQ:B0B1}
  B_0 = 1, \quad B_1 = -\frac{1}{2},
\end{equation}
and continues with
$$
  B_2 = \frac{1}{6},\; B_4 = -\frac{1}{30},\; B_6 = \frac{1}{42},\;
  B_8 = -\frac{1}{30},\; B_{10} = \frac{5}{66},\; B_{12} = -\frac{691}{2730},\; \dotso.
$$

\begin{definition} \label{DEF:denom}
Every rational number $\rho \in \Q$ can be written uniquely in lowest terms as
a fraction $\rho = \nu / \delta$ with $\nu \in \Z$ and $\delta \in \N$. We define
$\denom (\rho)\DefEq \delta$. (In particular, if $m \in \Z$, then $\denom(m) = 1$.)
This definition extends uniquely to polynomials \mbox{$p(x) \in \Q[x]$} by
defining $\denom \bigl( p(x) \bigr)$ to be the smallest positive integer $d$
such that \mbox{$d \cdot p(x) \in \Z[x]$}.
\end{definition}

A fundamental property of the nonzero Bernoulli numbers $B_n$ is the simple shape
of their denominators. By the famous {\em von Staudt--Clausen theorem}
(cf.~\cite[p.~109]{cg}, \cite[Section 7.9]{hw}),
independently found by von Staudt~\cite{staudt} and Clausen~\cite{clausen} in 1840,
\begin{equation} \label{EQ:denom-B_n}
  \denom ( B_n ) =
  \hspace{-0.3em}
  \prod_{(p-1) \, \mid \, n}
  \hspace{-0.3em} p
  \quad (n \in 2\N),
\end{equation}
where $n$ is even and the product runs over all primes $p$ such that $p-1$
divides $n$ (thus always including the primes $2$ and $3$).
Together with $B_0$ and $B_1$, all nonzero Bernoulli numbers have
a squarefree denominator.

The {\em integer-valued} polynomial $S_n(x) \in \Q[x]$ satisfies the functional
equation
\begin{equation} \label{EQ:func-S_n}
  S_n(x+1) = S_n(x) + x^n = 1^n + 2^n + \dotsb + x^n \quad (x \in \N).
\end{equation}
Definitions~\ref{DEF:d_n} and~\ref{DEF:denom} yield that $d_n = \denom(S_n(x+1))$.
As we now show, it is also true that
\begin{equation} \label{EQ:denom-S_n}
  d_n = \denom \bigl( S_n(x) \bigr).
\end{equation}
This enables a link to Bernoulli's formula \eqref{EQ:def-S_n} and the Bernoulli
polynomials.

The case $n=0$ of \eqref{EQ:denom-S_n} is easily seen directly.
For $n \geq 1$, we obtain from formulas \eqref{EQ:def-S_n}, \eqref{EQ:def-B_nx},
and \eqref{EQ:B0B1} that
\begin{equation} \label{EQ:first-part-S_n}
  S_n(x) = \frac{x^{n+1}}{n+1} - \frac{x^n}{2}  + \dotsb.
\end{equation}
Comparison with \eqref{EQ:func-S_n} shows that $S_n(x)$ differs from $S_n(x+1)$
by the summand $x^n$, which only results in a sign change from $- \frac{1}{2}$
to $+ \frac{1}{2}$ in the coefficient of $x^n$ in \eqref{EQ:first-part-S_n}.
This sign change has no effect on the denominators of the polynomials in question,
so \eqref{EQ:denom-S_n} holds. (All of this will be shown in more detail in the
proof of Theorem~\ref{THM:main}.)

Revisiting the formulas of Faulhaber on the first page,
one might notice the simple pattern $\frac{1}{d_n} \times \text{polynomial}$.
The next lemma clarifies this observation that the numerator is always~$1$, as
a supplement to our study of power-sum denominators. (We omit the trivial
case $n=0$ here, since $S_n(x)$ is defined for $n \geq 1$.)

\begin{lemma}
For $n \geq 1$,  we have
$$
  S_n(x) = \frac{1}{d_n} \cdot p_n(x),
$$
where the coefficients of the polynomial $p_n(x) \in \Z[x]$ are coprime.
\end{lemma}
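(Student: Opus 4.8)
The plan is to show that the polynomial $p_n(x) \DefEq d_n \cdot S_n(x)$, which lies in $\Z[x]$ by \eqref{EQ:denom-S_n} and the definition of $d_n$, is in fact \emph{primitive}, i.e.\ that the greatest common divisor of its coefficients is~$1$. I would argue one prime at a time, using the $p$-adic valuation $v_p$, extended to a polynomial $f \in \Q[x]$ by setting $v_p(f) \DefEq \min_i v_p(c_i)$, where the $c_i$ are the coefficients of $f$. With this notation, coprimality of the coefficients of $p_n(x)$ is precisely the assertion that $v_p(p_n) = 0$ for every prime $p$.

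The key observation is that the leading term already pins down each valuation. By \eqref{EQ:first-part-S_n}, the coefficient of $x^{n+1}$ in $S_n(x)$ is $\frac{1}{n+1}$, whose valuation $v_p\!\left(\frac{1}{n+1}\right) = -v_p(n+1) \le 0$ is non-positive for every prime $p$. Since $v_p(S_n)$ is a minimum over all coefficients, it follows that $v_p(S_n) \le 0$ for all $p$.

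Next I would translate this into a statement about $d_n$ itself. A positive integer $d$ satisfies $d \cdot S_n(x) \in \Z[x]$ exactly when $v_p(d) \ge -v_p(S_n)$ for every $p$, so the minimality in $d_n = \denom\bigl(S_n(x)\bigr)$ forces $v_p(d_n) = \max\bigl(0,\, -v_p(S_n)\bigr)$. Because $v_p(S_n) \le 0$ was just established, this simplifies to $v_p(d_n) = -v_p(S_n)$ for every prime~$p$. Using that $v_p$ is additive on scalar multiples, $v_p(d_n \cdot S_n) = v_p(d_n) + v_p(S_n)$, I then obtain $v_p(p_n) = v_p(d_n) + v_p(S_n) = 0$ for all $p$; hence no prime divides every coefficient of $p_n(x)$, which is the claim.

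I do not expect a genuine obstacle: the entire argument rests on the single fact that the leading coefficient $\frac{1}{n+1}$ has non-positive valuation at every prime, which is what guarantees that clearing denominators minimally cannot leave a common factor behind in the numerator. The only points requiring care are to record that $v_p$ behaves as an additive valuation on products $d_n \cdot S_n$, and to note that the conclusion $v_p(p_n)=0$ for all $p$ is literally the statement $\gcd$ of the coefficients equals $\prod_p p^{\,v_p(p_n)} = 1$.
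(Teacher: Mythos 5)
Your argument is correct, but it is genuinely different from the paper's. The paper writes $S_n(x) = \frac{a_n}{d_n}\, p_n(x)$ with $\gcd(a_n,d_n)=1$ and $p_n$ primitive, and then kills the numerator $a_n$ by exploiting that $S_n$ is integer-valued: since $a_n \mid S_n(x)$ for all integers $x$ and $S_n(2)=1^n=1$, it follows that $a_n=1$. You instead run a content computation prime by prime: the leading coefficient $\frac{1}{n+1}$ from \eqref{EQ:first-part-S_n} forces $v_p(S_n)\le 0$ for every $p$, the minimality in $d_n=\denom\bigl(S_n(x)\bigr)$ (via \eqref{EQ:denom-S_n}) gives $v_p(d_n)=\max\bigl(0,-v_p(S_n)\bigr)=-v_p(S_n)$, and additivity of $v_p$ under scalar multiplication yields $v_p(d_n S_n)=0$ for all $p$. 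Both routes are sound. The paper's proof is the more elementary one and uses a special arithmetic feature of $S_n$ (its value at $x=2$); yours isolates a purely structural reason --- any rational polynomial having some coefficient of non-positive valuation at every prime becomes primitive after minimal denominator clearing --- and so would apply verbatim to, say, $B_{n+1}(x)-B_{n+1}$ or any monic-up-to-$\frac{1}{n+1}$ polynomial, with no appeal to integer-valuedness. The only steps you should spell out are the standard facts that $v_p(d\cdot f)=v_p(d)+v_p(f)$ for the coefficientwise valuation and that the minimal $d$ with $d\cdot f\in\Z[x]$ is $\prod_p p^{\max(0,-v_p(f))}$; both are routine.
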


\begin{proof}
From \eqref{EQ:denom-S_n} we deduce the decomposition
$$
  S_n(x) = \frac{a_n}{d_n} \cdot p_n(x),
$$
where $a_n$ and $d_n$ are coprime positive integers, and $p_n(x) \in \Z[x]$ has
coprime coefficients. Since $S_n(x)$ is integer-valued, we infer that
$S_n(x) \equiv 0 \pmod{a_n}$ for $x = 1, 2, \dotsc$. In particular,
$1 = S_n(2) \equiv 0 \pmod{a_n}$, which implies $a_n = 1$.
\end{proof}

The lemma confirms the observation that, in Faulhaber's formulas on
the first page, the integer coefficients of the $n$th polynomial are coprime.
Moreover, their sum must equal the denominator $d_n$, as one sees by setting $x=1$.


\section{Lemmas} \label{SEC:lemmas}
Before we give proofs of our theorems, we need several more lemmas.

\begin{lemma} \label{LEM:poly-T_n}
For $n \geq 1$, define
$$
  T_n(x) \DefEqDisp (n+1) \frac{S_n(x)}{x}.
$$
Then $T_n(x)$ is a monic polynomial in $\Q[x]$, and is given by
$$
  T_n(x) = \sum_{k=0}^{n} \binom{n+1}{k} B_k \, x^{n-k}.
$$
\end{lemma}

\begin{proof}
From \eqref{EQ:def-S_n} and \eqref{EQ:def-B_nx}, it follows that
$$
  T_n(x) = \frac{B_{n+1}(x) - B_{n+1}}{x}
         = \sum_{k=0}^{n} \binom{n+1}{k} B_k \, x^{n-k} \in \Q[x].
$$
Since the coefficient of $x^n$ is $\binom{n+1}{0} B_0 = 1$ by \eqref{EQ:B0B1},
the polynomial $T_n(x)$ is monic.
\end{proof}

\begin{lemma} \label{LEM:pset_def}
For any integers $m \geq k \geq 0$, we compute the denominators
$$
  D_{m,k} \DefEqDisp \denom \left( \binom{m}{k} B_k \right)
  = \hspace{-0.3em} \prod_{p \, \in \, \PSet_{m,k}} \hspace{-0.3em} p,
$$
where the sets $\PSet_{m,k}$ of primes are defined by the following cases:
\begin{enumerate}
\item $\PSet_{m,k} \DefEqDisp \emptyset$, if $k=0$ or $k \geq 3$ is odd.

\item $\PSet_{m,k} \DefEqDisp
      \begin{cases}
        \, \emptyset, &\text{if $k=1$ and $m$ is even}, \\
        \, \{ 2 \},   &\text{if $k=1$ and $m$ is odd}.
      \end{cases}$

\item $\PSet_{m,k} \DefEqDisp \left\{ p : (p-1) \mid k \ \text{and}\
      p \nmid \binom{m}{k} \right\}$, if $k \geq 2$ is even.
\end{enumerate}
\end{lemma}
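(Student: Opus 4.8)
The plan is to treat the three cases separately, with cases (i) and (ii) being essentially immediate and case (iii) carrying the real content. For case (i), if $k=0$ then $\binom{m}{0}B_0 = 1$ by \eqref{EQ:B0B1}, whose denominator is $1$, the empty product; and if $k \geq 3$ is odd, then $B_k = 0$, so $\binom{m}{k}B_k = 0$ has denominator $1$ as well, matching $\PSet_{m,k} = \emptyset$. For case (ii) with $k=1$, formula \eqref{EQ:B0B1} gives $\binom{m}{1}B_1 = -m/2$, whose reduced denominator is $1$ when $m$ is even and $2$ when $m$ is odd, exactly matching the stated sets $\emptyset$ and $\{2\}$.

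The substance lies in case (iii), where $k \geq 2$ is even. Here I would first invoke the von Staudt--Clausen theorem \eqref{EQ:denom-B_n} to write $B_k$ in lowest terms as $B_k = \nu_k/\delta_k$ with $\delta_k = \denom(B_k) = \prod_{(p-1)\mid k} p$. The two structural facts I want to extract are that $\delta_k$ is \emph{squarefree} and that $\gcd(\nu_k,\delta_k)=1$. Since $m \geq k$, the integer $\binom{m}{k}$ is positive, so $\binom{m}{k}B_k = \binom{m}{k}\nu_k/\delta_k$ is a nonzero rational, and its reduced denominator is what I need to compute.

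Next I would pass to $p$-adic valuations $v_p$ to read off precisely which primes survive. For a prime $p \nmid \delta_k$ we have $v_p\bigl(\binom{m}{k}\nu_k/\delta_k\bigr) = v_p\bigl(\binom{m}{k}\bigr) + v_p(\nu_k) \geq 0$, so such $p$ never enters the denominator. For a prime $p \mid \delta_k$, squarefreeness gives $v_p(\delta_k)=1$ and coprimality gives $v_p(\nu_k)=0$, whence $v_p\bigl(\binom{m}{k}B_k\bigr) = v_p\bigl(\binom{m}{k}\bigr) - 1$; this is negative exactly when $v_p\bigl(\binom{m}{k}\bigr)=0$, that is, when $p \nmid \binom{m}{k}$. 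Collecting the primes $p$ with $(p-1)\mid k$ and $p \nmid \binom{m}{k}$ then yields $\denom\bigl(\binom{m}{k}B_k\bigr) = \prod_{p \in \PSet_{m,k}} p$, as claimed.

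The only place where care is genuinely needed is this interplay in case (iii) between the squarefreeness of $\delta_k$ and the coprimality of $\nu_k$ and $\delta_k$: it is exactly the pairing $v_p(\delta_k)=1$ with $v_p(\nu_k)=0$ that collapses the valuation computation to the single condition $p \nmid \binom{m}{k}$. Were $\delta_k$ not squarefree, the surviving primes would depend on the finer multiplicity $v_p\bigl(\binom{m}{k}\bigr)$ rather than merely on whether it vanishes, so the von Staudt--Clausen theorem is precisely what makes the clean product formula possible.
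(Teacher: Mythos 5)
Your proposal is correct and follows essentially the same route as the paper: cases (i) and (ii) are handled identically from $B_0=1$, $B_1=-\tfrac{1}{2}$, and $B_k=0$ for odd $k>1$, while case (iii) rests, as in the paper, on the von Staudt--Clausen theorem together with the squarefreeness of $\denom(B_k)$ to conclude that exactly the primes with $(p-1)\mid k$ and $p\nmid\binom{m}{k}$ survive. Your explicit $p$-adic valuation computation merely spells out the cancellation that the paper states in one sentence, so no further comment is needed.
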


\begin{proof}
Recall that $B_0 = 1$ and $B_1 = -\frac{1}{2}$ by \eqref{EQ:B0B1},
and that $B_k = 0$ for odd $k > 1$.

(i). It follows that $D_{m,0} = 1$, and $D_{m,k} = 1$ if $k \geq 3$ is odd.
This shows case (i).

(ii). We have $D_{m,1} = \denom\left( -\frac{m}{2} \right)$, so $D_{m,1} = 1$
if $m$ is even, while $D_{m,1} = 2$ if $m$ is odd. Case (ii) follows.

(iii). By the von Staudt--Clausen theorem in \eqref{EQ:denom-B_n},
$\denom(B_k)$ is squarefree.
Thus $D_{m,k}$ is the product of the primes given by \eqref{EQ:denom-B_n}
(with $k$ in place of $n$), but excluding prime factors of $\binom{m}{k}$.
This proves case (iii).
\end{proof}

Hereafter, we will use the convention that $\max(\emptyset) \DefEq 0$.

\begin{lemma} \label{LEM:pset_odd}
For odd $m \geq 3$ and $k = 2, 4, \dotsc, m-1$, the set $\PSet_{m,k}$
defined in Lemma~\ref{LEM:pset_def} satisfies the bound
$$
  \max(\PSet_{m,k}) \leq \min \left( k+1, \frac{m+1}{2} \right).
$$
\end{lemma}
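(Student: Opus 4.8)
The plan is to analyze the set $\PSet_{m,k}$ from case (iii) of Lemma~\ref{LEM:pset_def}, which for even $k$ with $2 \leq k \leq m-1$ consists of primes $p$ satisfying both $(p-1) \mid k$ and $p \nmid \binom{m}{k}$. I need to show that any such prime is at most $\min(k+1, (m+1)/2)$. Since the bound is a minimum of two quantities, I would establish the two inequalities separately: every $p \in \PSet_{m,k}$ satisfies $p \leq k+1$, and every such $p$ also satisfies $p \leq (m+1)/2$.

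First I would dispose of the bound $p \leq k+1$, which follows immediately from the divisibility condition $(p-1) \mid k$. Indeed, if $p - 1$ divides the positive integer $k$, then $p - 1 \leq k$, so $p \leq k+1$. This part requires nothing beyond the definition of $\PSet_{m,k}$. The real content of the lemma is therefore the second bound, $p \leq (m+1)/2$, and this is where I expect the main obstacle to lie.

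For the second bound, I would argue by contraposition: suppose a prime $p$ satisfies $(p-1) \mid k$ but $p > (m+1)/2$, and show that then $p \mid \binom{m}{k}$, so that $p \notin \PSet_{m,k}$. The key tool is Legendre's formula (or Kummer's theorem) expressing the $p$-adic valuation of $\binom{m}{k} = \frac{m!}{k!(m-k)!}$ via base-$p$ digit sums or carries. The hypothesis $p > (m+1)/2$ together with $m$ odd means that $p$ lies in a range where its multiples are sparse among $1, 2, \dotsc, m$; concretely, since $k \leq m-1$ and $p-1 \leq k$, one has $p \leq k+1 \leq m$, so $p$ is a genuine factor appearing in $m!$. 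The plan is to count how many multiples of $p$ appear in $m!$ versus in $k!\,(m-k)!$ and show the numerator has strictly more, forcing $p \mid \binom{m}{k}$. The condition $p > (m+1)/2$ ensures $2p > m+1$, so at most one multiple of $p$ is at most $m$, and I would exploit the relation $k + (m-k) = m$ to show that the single factor of $p$ in the numerator cannot be cancelled in the denominator.

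The delicate point — the main obstacle — is handling the interplay between the two constraints $p > (m+1)/2$ and $(p-1) \mid k$ when $p$ is close to the boundary, and verifying that the parity hypothesis ($m$ odd, $k$ even) is used correctly to rule out edge cases where a multiple of $p$ could split evenly between $k!$ and $(m-k)!$. I would carefully track whether $\lfloor m/p \rfloor$ exceeds $\lfloor k/p \rfloor + \lfloor (m-k)/p \rfloor$, which by Legendre's formula is exactly the $p$-adic valuation $v_p\bigl(\binom{m}{k}\bigr)$; showing this quantity is positive under the stated hypotheses completes the contrapositive and hence the bound $p \leq (m+1)/2$. Combining the two bounds yields $\max(\PSet_{m,k}) \leq \min(k+1, (m+1)/2)$, with the convention $\max(\emptyset) = 0$ covering the case where $\PSet_{m,k}$ is empty.
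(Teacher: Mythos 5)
Your plan is correct and would go through, but it packages the argument differently from the paper. The paper splits into three cases: if $k < \frac{m+1}{2}$ the trivial bound $p \leq k+1$ already suffices; if $k \geq \frac{m+1}{2}$ and $p-1$ is a \emph{proper} divisor of $k$, then $\frac{k}{p-1} \geq 2$ gives $p - 1 \leq \frac{k}{2} \leq \frac{m-1}{2}$ directly, with no contradiction needed; only in the remaining case $p - 1 = k \geq \frac{m+1}{2}$ does it derive $p \mid \binom{m}{p-1}$, and it does so by writing $\binom{m}{p-1} = \frac{m(m-1)\dotsm(m-p+2)}{(p-1)!}$ and observing that $m-p+2 \leq p \leq m$ places the factor $p$ explicitly in the numerator while $(p-1)!$ is prime to $p$. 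You instead run a single contrapositive using Legendre's formula, which is a legitimate and arguably more systematic route; what it buys is that the paper's Cases 2 and 3 collapse into one computation, at the cost of invoking $p$-adic valuations where the paper needs only the definition of a binomial coefficient. To execute your plan you should make one step explicit that you currently only gesture at: from $p > \frac{m+1}{2}$ you get $2(p-1) \geq m > k$, so the divisibility $(p-1) \mid k$ with $0 < k < 2(p-1)$ forces $k = p-1$ exactly; this is what gives $\lfloor k/p \rfloor = 0$ and, via $m - k = m - p + 1 < p$, also $\lfloor (m-k)/p \rfloor = 0$, while $p \leq k+1 \leq m < 2p$ gives $\lfloor m/p \rfloor = 1$, so $v_p\bigl(\tbinom{m}{k}\bigr) \geq 1$. (The parity of $m$, which you flag as a worry, is not actually needed here; it only matters for the sharper bound $\frac{m+1}{3}$ in the even-$m$ companion lemma.)
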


\begin{proof}
If $P_{m,k} = \emptyset$, the bound holds vacuously, and we are done.
Otherwise, fix \mbox{$p \in P_{m,k}$}. Then $(p-1) \mid k$, so the trivial bound
$p \leq k+1$ holds. It remains to show that $p \leq m_o \DefEq \frac{1}{2} (m+1)$
when $k \leq m-1$.

\textit{\mbox{Case 1}.}~
If $k < m_o$, then $p \leq k+1 \leq m_o$ and we are done.

\textit{\mbox{Case 2}.}
If $m_o \leq k \leq m-1$, assume first that $p-1 < k$.
Then the integer \mbox{$\frac{k}{p-1} \geq 2$}, so
$p-1 \leq \frac{1}{2} k \leq \frac{1}{2} (m-1) < m_o$. Thus $p \leq m_o$.

\textit{\mbox{Case 3}.}
There remains the possibility that $m_o \leq k \leq m-1$
and $p-1 = k$. But then $m-p+2 \leq p \leq m$, so the binomial coefficient
\begin{equation} \label{EQ:binomial}
  \binom{m}{k} = \binom{m}{p-1} = \frac{m (m-1) \dotsm p \dotsm (m-p+2)}{(p-1)!}
\end{equation}
is divisible by the prime $p$, contradicting $p \in \PSet_{m,k}$.
This completes the proof.
\end{proof}

\begin{lemma} \label{LEM:pset_even}
For even $m \geq 4$ and $k = 2,4,\dotsc, m-2$, the set $\PSet_{m,k}$
defined in Lemma~\ref{LEM:pset_def} satisfies the bound
$$
  \max( \PSet_{m,k} ) \leq \min \left( k+1, \frac{m+1}{3} \right).
$$
\end{lemma}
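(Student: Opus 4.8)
The plan is to mirror the proof of Lemma~\ref{LEM:pset_odd}, but with the sharper target $\tfrac{m+1}{3}$, which forces a more careful analysis of the divisibility constraint $p \nmid \binom{m}{k}$. If $\PSet_{m,k} = \emptyset$ the bound holds vacuously (recall $\max(\emptyset) = 0$), so I fix a prime $p \in \PSet_{m,k}$; by case~(iii) of Lemma~\ref{LEM:pset_def} this means $(p-1) \mid k$ and $p \nmid \binom{m}{k}$. The bound $p \leq k+1$ is immediate from $(p-1)\mid k$, so it remains to show $p \leq \tfrac{m+1}{3}$, equivalently $3(p-1) \leq m-2$, whenever $2 \leq k \leq m-2$. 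Writing $k = (p-1)t$ with $t \geq 1$ an integer, I will treat the three cases $t \geq 3$, $t = 1$, and $t = 2$.

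The cases $t\ge 3$ and $t=1$ are short. If $t \geq 3$, then $3(p-1) \leq (p-1)t = k \leq m-2$ at once, giving $3p \leq m+1$. If $t = 1$, then $k = p-1$ is even, so $p \geq 3$ is odd; the factorization \eqref{EQ:binomial} exhibits $\binom{m}{p-1}$ as the product of the $p-1$ consecutive integers $m-p+2,\dots,m$ divided by $(p-1)!$, and since $(p-1)!$ is prime to $p$, the condition $p \nmid \binom{m}{p-1}$ holds exactly when none of those integers is a multiple of $p$, that is, when $m \equiv -1 \pmod{p}$. As $k = p-1 \leq m-2$ gives $p \leq m-1 < m+1$, the prime $p$ is a \emph{proper} divisor of the odd number $m+1$; hence $m+1 = p\,s$ with $s$ odd and $s \geq 3$, so $p \leq \tfrac{m+1}{3}$.

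The remaining case $t = 2$, with $k = 2p-2$, is the crux, and here $k \le m-2$ forces $m \ge 2p$. For $p=2$ one has $k=2$, and either $m=4$ (where $\binom{4}{2}=6$ is even, so $2 \notin \PSet_{4,2}$ and there is nothing to prove) or $m\ge 6$ (where $2 \le \tfrac{m+1}{3}$ holds outright). For odd $p \ge 3$ I argue directly that $m \ge 3p-1$. Indeed, suppose instead $2p \le m \le 3p-2 < p^2$; writing $v_p(\cdot)$ for the exponent of $p$, Legendre's formula gives $v_p(m!) = 2$, $v_p\bigl((2p-2)!\bigr) = 1$, and $v_p\bigl((m-2p+2)!\bigr)$ equal to $0$ or $1$ according as $m < 3p-2$ or $m = 3p-2$, so that $v_p\!\left(\binom{m}{2p-2}\right) = 1 > 0$ unless $m = 3p-2$. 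But $m=3p-2$ is odd for odd $p$, contradicting that $m$ is even, while $v_p > 0$ contradicts $p \nmid \binom{m}{k}$. Thus $2p \le m \le 3p-2$ is impossible, forcing $m \ge 3p-1$ and hence $3p \le m+1$. This exhausts all cases.

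The main obstacle is precisely the case $t=2$: unlike the odd lemma, the trivial bound $p \le k+1$ combined with $p-1 \le \tfrac12 k$ is too weak to reach $\tfrac{m+1}{3}$, so one genuinely needs the $p$-adic valuation $v_p\!\left(\binom{m}{2p-2}\right)$ computed via Legendre's formula, together with the parity of $m$ to eliminate the single boundary value $m = 3p-2$.
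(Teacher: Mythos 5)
Your proof is correct, and its skeleton is the same as the paper's: reduce to showing $3p \leq m+1$, split on $t = k/(p-1) \in \{1, 2, \geq 3\}$, dispose of $p=2$ separately, and use the parity of $m$ to squeeze the bound from $3p \leq m+2$ down to $3p \leq m+1$. Two of your cases are executed by genuinely different means, though. For $t=2$ the paper inspects the product $\binom{m}{2p-2} = \frac{m(m-1)\dotsm(m-2p+3)}{(2p-2)!}$ and observes that when $m \leq 3p-3$ the numerator contains both $p$ and $2p$ while the denominator contains only $p$; you reach the same conclusion by computing $v_p\bigl(\binom{m}{2p-2}\bigr)$ via Legendre's formula on the range $2p \leq m \leq 3p-2$ and then killing the boundary value $m=3p-2$ by parity --- equivalent content, slightly more machinery. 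For $t=1$ your argument is a genuine improvement in elegance: you note that $p \nmid \binom{m}{p-1}$ forces $p \mid (m+1)$, and since $m+1$ is odd with $p$ a proper divisor, the cofactor is an odd integer $\geq 3$, giving $p \leq \frac{m+1}{3}$ in one stroke. The paper instead runs a two-stage exclusion (first showing $2p \leq m$, then that $2p$ would lie in the numerator range if $p > \frac{m+1}{3}$). Your divisor-of-$m+1$ observation also foreshadows the criterion $p \nmid (n+2)$ that appears later in Theorem~\ref{THM:formula-eps}, so it buys a small conceptual link the paper's local argument does not make explicit.
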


\begin{proof}
As in the proof of Lemma~\ref{LEM:pset_odd} through Case~1,
it suffices to show that $p \in \PSet_{m,k}$ implies
$p \leq m_e \DefEq \frac{1}{3} (m+1)$ when $m_e \leq k \leq m-2$.

If $p=2$, then $m=4$ would imply that $ \binom{m}{k} = \binom{4}{2} = 6$ is
divisible by $p$, contradicting $p \in \PSet_{m,k}$. Thus if $p=2$, then
$m \geq 6$, which implies $p \leq m_e$. So from now on we assume that $p$ is odd.
Set $k' \DefEq \frac{k}{p-1} \in \N$.

\textit{Case 1.}
If $k' \geq 3$, then $p-1 \leq \frac{1}{3} k \leq \frac{1}{3} (m-2) = m_e-1$,
so $p \leq m_e$.

\textit{Case 2.}
If $k' = 2$, then $2p = k+2 \leq m$. If in addition $m-2p+3 \leq p$, then
$$
  \binom{m}{k}
  = \binom{m}{2p-2}
  = \frac{m (m-1) \dotsm 2p \dotsm p \dotsm (m-2p+3)}{1 \cdot 2 \dotsm p \dotsm (2p-2)}
$$
is divisible by $p$, contradicting $p \in \PSet_{m,k}$. Hence $p < m-2p+3$,
so $3p \leq m+2$. Now, $p$ odd and $m$ even imply $3p \leq m+1$, so $p \leq m_e$.

\textit{Case 3.}
If $k' = 1$, then $p = k+1 < m$. If in addition $m - p + 2 \leq p$,
then \eqref{EQ:binomial} implies that $p \mid \binom{m}{k}$, a contradiction.
Thus $2p < m+2$. As $2p$ and $m$ are even, we therefore get $2p \leq m$.
Now, if $m_e < p$, then $m-p+1 < 2p$, so
$$
  \binom{m}{k}
  = \binom{m}{p-1}
  = \frac{m (m-1) \dotsm 2p \dotsm (m-p+2)}{(p-1)!}
$$
is divisible by $p$, contradicting  $p \in \PSet_{m,k}$. Thus $p \leq m_e$,
and we are done.
\end{proof}

For the next lemma we need some properties of binomial coefficients modulo~$2$.
Drawing {\em Pascal's triangle}~$\!\!\pmod{2}$, with a dot for the digit~$1$
and a blank for~$0$, one obtains down to row $2^r-1$ for $r \geq 2$ a dotted,
framed triangle $\Delta_r$ with a self-similar pattern (see Figures~\ref{FIG:Pascal1}
and~\ref{FIG:Pascal2} as well as \cite[Fig.~2, p.~567]{wolfram} and \cite{gran}).
Letting $r \to \infty$ while scaling to an equilateral triangle of fixed size,
this leads to a fractal, which is subdivided recursively and is called the
{\em Sierpi\'{n}ski gasket}, introduced in \cite{sier}.

\begin{figure}[htb]
  \centering
  \begin{minipage}[t]{0.49\linewidth}
    \centering
    \includegraphics[width=4.9cm]{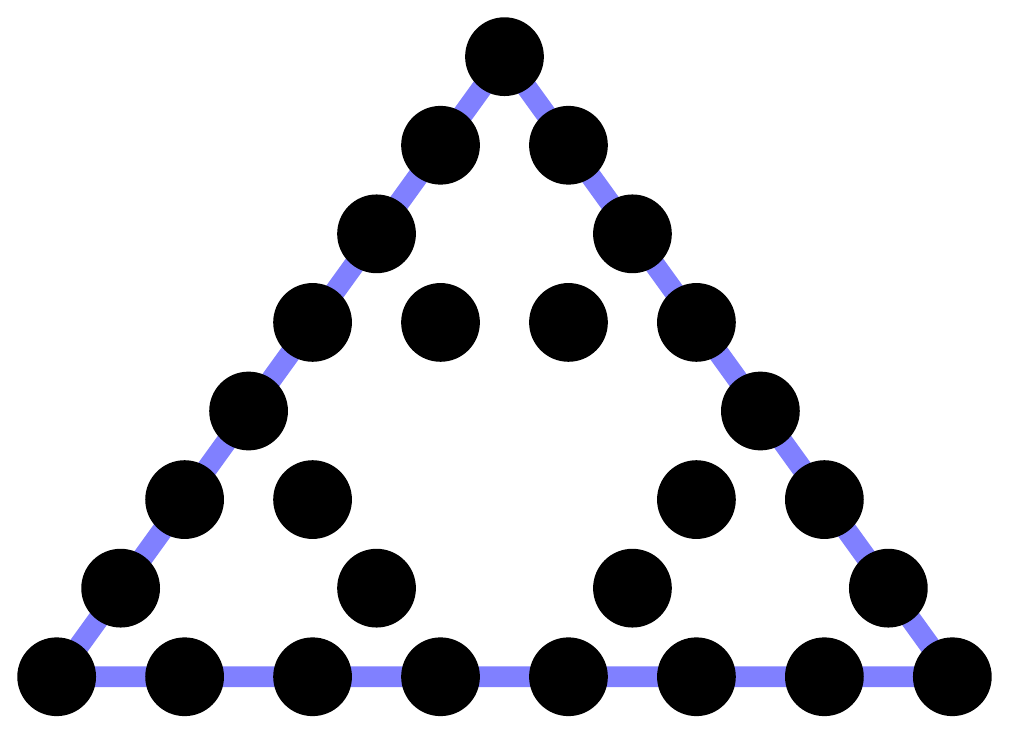}
    \caption[Cap]{Pascal's triangle~$\!\!\!\pmod{2}$:\,$\Delta_3$.}
    \label{FIG:Pascal1}
  \end{minipage}%
  \hfill
  \begin{minipage}[t]{0.49\linewidth}
    \centering
    \includegraphics[width=4.9cm]{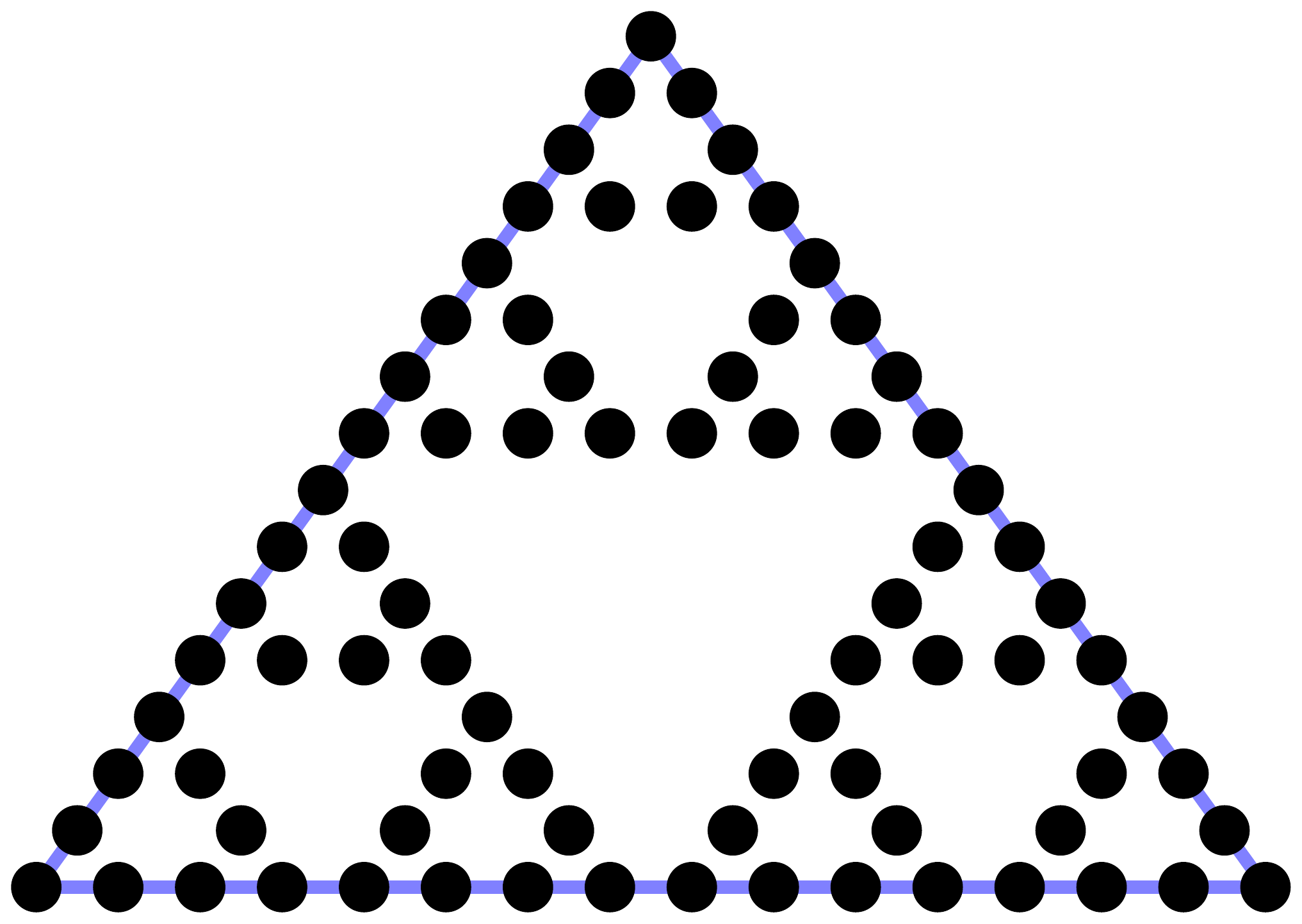}
    \caption[Cap]{Pascal's triangle~$\!\!\!\pmod{2}$:\,$\Delta_4$.}
    \label{FIG:Pascal2}
  \end{minipage}
\end{figure}

In the $m$th row of Pascal's triangle, the entries
$\binom{m}{k} \not\equiv 0 \pmod{p}$ with \mbox{$0 \leq k \leq m$}
can be counted as follows. Writing $m$ as a string
$\alpha_\ell \alpha_{\ell-1} \dotsm \alpha_0$ of base-$p$ digits
$\alpha_j$ $(0 \leq j \leq \ell)$, the number of such entries equals
$$
  \#_p(m) \DefEqDisp (\alpha_0 + 1) (\alpha_1 + 1) \dotsm (\alpha_\ell + 1).
$$
The case $p=2$, which we use below, was proved by Glaisher \cite{glaisher}, and
the general case by Fine \cite{fine}. Since $\binom{m}{0} = \binom{m}{m} = 1$,
we deduce that
\begin{equation} \label{EQ:binom-even}
\begin{split}
  \binom{m}{k} \text{\ is even} \quad (0 < k < m) \quad
  &\iff \quad \#_2(m) = 2 \\
  &\iff \quad m = 2^r \quad (r \geq 1).
\end{split}
\end{equation}

As a complement, it follows easily that $\binom{m}{0}, \binom{m}{1}, \dotsc, \binom{m}{m}$
are all odd if and only if $m=2^r-1$ $(r \geq 0)$. This explains, together with
$\binom{m}{0} = \binom{m}{m} = 1$ in each row, why the above mentioned triangle
$\Delta_r$ is always framed.

We now consider a special case, which we use later.

\begin{lemma} \label{LEM:binom-2-power}
Let $m \geq 4$ be even. The binomial coefficients
$\binom{m}{2}, \binom{m}{4}, \dotsc, \binom{m}{m-2}$
are all even if and only if $m$ is a power of~$2$.
\end{lemma}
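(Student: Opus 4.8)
The plan is to prove both directions by reducing the parity question for the even-indexed entries of row $m$ to the parity question for the \emph{entire} row $\mu \DefEq m/2$ of Pascal's triangle, and then invoking the characterization \eqref{EQ:binom-even}. Write $m = 2\mu$ with $\mu \geq 2$. The forward implication is immediate: if $m = 2^r$, then by \eqref{EQ:binom-even} \emph{every} $\binom{m}{k}$ with $0 < k < m$ is even, so in particular the even-indexed entries $\binom{m}{2}, \dotsc, \binom{m}{m-2}$ are even.

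For the converse I would first establish the parity congruence
\[
  \binom{2\mu}{2k} \equiv \binom{\mu}{k} \pmod 2 \qquad (0 \le k \le \mu).
\]
This follows by reducing the generating polynomial modulo $2$: since $(1+x)^2 \equiv 1 + x^2 \pmod 2$, we have $(1+x)^{2\mu} = \bigl((1+x)^2\bigr)^\mu \equiv (1+x^2)^\mu = \sum_{k} \binom{\mu}{k} x^{2k} \pmod 2$, and comparing the coefficient of $x^{2k}$ on each side yields the congruence. In words, the parities of the even-indexed entries in row $2\mu$ reproduce exactly the parities of the full row $\mu$.

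Under this correspondence the interior even-indexed entries $\binom{m}{2}, \binom{m}{4}, \dotsc, \binom{m}{m-2}$, namely $\binom{2\mu}{2k}$ for $1 \le k \le \mu-1$, match the interior entries $\binom{\mu}{1}, \dotsc, \binom{\mu}{\mu-1}$. Hence the former are all even if and only if $\binom{\mu}{k}$ is even for every $0 < k < \mu$. Applying \eqref{EQ:binom-even} to $\mu$ (legitimate since $\mu \ge 2$), this holds exactly when $\mu = 2^s$ for some $s \ge 1$, that is, when $m = 2\mu = 2^{s+1}$ is a power of $2$. This establishes the converse and completes the equivalence.

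The only real content is the parity congruence; once it is in hand the lemma is a one-line consequence of \eqref{EQ:binom-even}. The point to watch is the bookkeeping of index ranges: the endpoints $\binom{m}{0}, \binom{m}{m}$ (always odd) correspond to $\binom{\mu}{0}, \binom{\mu}{\mu}$ and must be excluded on both sides, so that the interior even indices $2, 4, \dotsc, m-2$ correspond precisely to $k = 1, \dotsc, \mu-1$; together with the mild check that $\mu \ge 2$, guaranteed by $m \ge 4$, so that \eqref{EQ:binom-even} applies to $\mu$ cleanly. As an alternative route for the converse one could argue directly: if $m$ is not a power of $2$ it has at least two binary digits equal to $1$, and since $m$ is even its lowest such digit sits at position $a \ge 1$; then $k = 2^a$ is an even index with $0 < k < m$ for which $\binom{m}{k}$ is odd, contradicting the hypothesis.
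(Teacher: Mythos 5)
Your proof is correct, but it takes a genuinely different route from the paper's. The paper stays inside row $m$: it notes that the hypothesis already covers the even indices, and then shows that for even $m$ the odd-indexed entries are \emph{automatically} even, via the absorption identity $\binom{m}{k} = \frac{m-(k-1)}{k}\binom{m}{k-1}$ (the numerator $m-(k-1)$ is even and $k$ is odd). Thus all interior entries of row $m$ are even, and \eqref{EQ:binom-even} applied to $m$ itself finishes the argument. You instead compress row $m=2\mu$ down to row $\mu$ via the parity congruence $\binom{2\mu}{2k}\equiv\binom{\mu}{k}\pmod 2$, obtained from $(1+x)^{2\mu}\equiv(1+x^2)^{\mu}\pmod 2$, and then apply \eqref{EQ:binom-even} to $\mu$; your index bookkeeping ($k=1,\dotsc,\mu-1$ matching $2,4,\dotsc,m-2$, and $\mu\ge 2$) is right, and the congruence is a clean special case of Lucas's theorem that also explains the self-similarity of the Sierpi\'nski pattern discussed just before the lemma. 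The paper's reduction is marginally more elementary (one integer identity, no generating functions) and needs \eqref{EQ:binom-even} only for the row in question, whereas yours gives an equivalence in one stroke and generalizes readily (the same halving argument characterizes the parity of any arithmetic subsequence of a row). Your alternative converse via binary digits is also valid: an even $m$ that is not a power of $2$ has a lowest $1$-bit at position $a\ge 1$, and $\binom{m}{2^a}$ is odd by Lucas, with $2^a$ an admissible even index.
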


\begin{proof}
In view of \eqref{EQ:binom-even}, it suffices to show that if $m$ is even and
$k$ is odd, then $\binom{m}{k}$ is also even.
Indeed, since $k$ is odd and so $m-(k-1)$ is even, we have
\begin{equation*}
  \binom{m}{k} =  \frac{m-(k-1)}{k} \binom{m}{k-1} \equiv 0 \pmod{2}. \qedhere
\end{equation*}
\end{proof}


\section{Proof of \texorpdfstring{Theorem~\ref{THM:main}}{Theorem~1}} \label{SEC:proofs}
We can now prove our first main result.

\begin{proof}[Proof of Theorem~\ref{THM:main}]
The case $n=0$ is trivial, with $d_0 = q_0 = M_0 = 1$ satisfying all required
properties. For the rest of the proof, we assume that $n \geq 1$.

(i), (ii).
By \eqref{EQ:denom-S_n} we have $d_n = \denom( S_n(x) )$. Combining
Lemmas~\ref{LEM:poly-T_n} and~\ref{LEM:pset_def} shows that
$$
  T_n(x) = (n+1)\frac{S_n(x)}{x}
    = \sum_{k=0}^{n} \frac{N_{n+1,k}}{D_{n+1,k}} \, x^{n-k},
$$
where the denominators $D_{n+1,k}$ are determined by Lemma~\ref{LEM:pset_def},
while the numerators $N_{n+1,k}$ are certain integers that play no role in the
proof. Since the $D_{n+1,k}$ are squarefree, the least common multiple
\begin{equation} \label{EQ:gamma_n}
  l_n \DefEqDisp \lcm( D_{n+1,1}, \dotsc, D_{n+1,n} )
\end{equation}
is also squarefree. Since $T_n(x) \in \Q[x]$ is a monic polynomial,
$l_n$ is the smallest positive integer with the property that
$$
	l_n \cdot T_n(x) \in \Z[x].
$$
Comparing this to the numbers $d_n$ and $q_n$, we observe that
$$
  d_n = (n+1) \, l_n \quad \text{and} \quad q_n = l_n.
$$
Using Lemma~\ref{LEM:pset_def} and definition \eqref{EQ:gamma_n}, we further
obtain that
\begin{equation} \label{EQ:q_n-prod}
  q_n =
  \hspace{-0.3em}
  \prod_{p \, \in \, \PSet_{n+1}}
  \hspace{-0.3em} p,
  \quad \text{where} \quad
  \PSet_n \DefEqDisp \bigcup_{k=1}^{n-1} \PSet_{n,k}.
\end{equation}
From the construction of the sets $\PSet_{n+1,k}$ (see Lemma~\ref{LEM:pset_def}),
we infer that
\begin{equation} \label{EQ:pset-1}
  \max( \PSet_{n+1} ) \leq n+1.
\end{equation}
This proves (i) and (ii).

(iii).
Let $m=n+1$. We have to show that $q_n$ is odd if and only if $m = 2^r$
with $r \geq 1$. By \eqref{EQ:q_n-prod}, we know that
$$
  2 \nmid q_n \quad \iff \quad
  2 \notin \PSet_{m,k} \quad (1 \leq k < m).
$$
Recall Lemma~\ref{LEM:pset_def}.
Since $\PSet_{m,1} = \{ 2 \}$ if $m$ is odd, and $\PSet_{m,1} = \emptyset$
otherwise, there remains the case where $m$ is even. If $m=2$, then
$\PSet_m = \emptyset$ and therefore $q_n=1$ is odd and we are done.
Now let $m \geq 4$ be even. Remember that $\PSet_{m,k} = \emptyset$ for odd
$k \geq 3$, and that if $k$ is even, then $2 \notin \PSet_{m,k}$ implies
$2 \mid \binom{m}{k}$.
With the help of Lemma~\ref{LEM:binom-2-power}, we finally deduce that
$2 \notin \PSet_{m,k}$, for $k = 2, 4, \dotsc, m-2$, if and only if $m$ is a
power of~$2$.
As a consequence, the product $m \, q_n = (n+1) \, q_n = d_n$ is always even
for $n \geq 1$. This shows (iii).

(iv).
We first compute the cases $q_1 = 1 = M_1$, $q_2 = 2 = M_2$,
$q_3 = 1 = \lfloor M_3 \rfloor$. Now take $n \geq 4$ and set $m=n+1$ again.
We will refine \eqref{EQ:pset-1} to show that
\begin{equation} \label{EQ:pset-2}
  \max( \PSet_m ) \leq M_n.
\end{equation}
Note that $M_n \geq 2$ for $n \geq 4$. Following the arguments of part~(iii)
and noting that $\max(\PSet_{m,1}) \leq 2$, the inequality \eqref{EQ:pset-2}
evidently turns into
\begin{equation} \label{EQ:pset-estimate}
  \max( \PSet_{m,k} ) \leq M_n \quad (k = 2, 4, \dotsc, m - \delta_m),
\end{equation}
where
\begin{equation} \label{EQ:def-delta}
  \delta_m \DefEqDisp
    \begin{cases}
      \, 1, &\text{if $m$ is odd,} \\
      \, 2, &\text{if $m$ is even.}
    \end{cases}
\end{equation}

\textit{Case $m$ odd.}
We apply Lemma~\ref{LEM:pset_odd} to get $\frac{m+1}{2} = \frac{n+2}{2} = M_n$.

\textit{Case $m$ even.}
Lemma~\ref{LEM:pset_even} yields $\frac{m+1}{3} = \frac{n+2}{3} = M_n$.

\noindent Both cases establish \eqref{EQ:pset-estimate} and, consequently,
\eqref{EQ:pset-2}. By \eqref{EQ:q_n-prod}, this shows the bounds in~(iv).
This completes the proof of Theorem~\ref{THM:main}.
\end{proof}


\section{Further properties} \label{SEC:properties}
Here we give an intermediate result that shows another formula for the values of
$q_n$, which we need later on.

\begin{theorem} \label{THM:formula-eps}
Let $q_n$ and $M_n$ be defined as in Theorem \ref{THM:main}.
For any fixed $n \geq 0$ we have
\begin{equation} \label{EQ:prod-q_n-eps}
  q_n = \hspace{-0.1em} \prod_{p \, \leq \, M_n}
        \hspace{-0.1em} p^{\varepsilon_p},
\end{equation}
where $\varepsilon_p$\ \!, which depends on $n$, is defined for $p=2$ by
$$
  \varepsilon_2 \DefEq
  \begin{cases}
    \, 0, & \text{if $n=2^r-1$ for some $r \geq 0$,} \\
    \, 1, & \text{otherwise,}
  \end{cases}
$$
and for an odd prime $p \leq M_n$ by
$$
  \varepsilon_p \DefEq
  \begin{cases}
    \, 0, & \text{if $p \nmid (n+2)$ and}\
            p \mid \binom{n+1}{j(p-1)}\ \text{for all}\
            j = 2,3, \dotsc, \left\lfloor \frac{n}{p-1} \right\rfloor - 1, \\
    \, 1, &\text{otherwise.}
  \end{cases}
$$
\end{theorem}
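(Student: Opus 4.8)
The plan is to show that, for each prime $p \le M_n$, the exponent $\varepsilon_p$ is precisely the indicator of the event $p \in \PSet_{n+1}$. By Theorem~\ref{THM:main}(ii) the quotient $q_n$ is squarefree and equals $\prod_{p \in \PSet_{n+1}} p$ by \eqref{EQ:q_n-prod}, while Theorem~\ref{THM:main}(iv) guarantees that every prime divisor of $q_n$ satisfies $p \le M_n$. Hence $q_n = \prod_{p \le M_n} p^{\varepsilon_p}$ will follow once we verify that $\varepsilon_p = 1 \iff p \in \PSet_{n+1}$. The prime $p=2$ is immediate from Theorem~\ref{THM:main}(iii): here $2 \in \PSet_{n+1}$ exactly when $q_n$ is even, i.e.\ when $n \ne 2^r-1$, which matches the definition of $\varepsilon_2$.

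For odd $p \le M_n$, write $m = n+1$ and $J = \lfloor n/(p-1) \rfloor$. Recalling Lemma~\ref{LEM:pset_def}, an odd $p$ lies in $\PSet_{m,k}$ only when $k$ is even with $(p-1)\mid k$ and $p \nmid \binom{m}{k}$; writing such $k$ as $k = j(p-1)$ (automatically even, since $p$ is odd) with $1 \le k \le m-1$, i.e.\ $1 \le j \le J$, we obtain
$$
  p \in \PSet_{n+1} \iff p \nmid \binom{m}{j(p-1)} \ \text{for some}\ j \in \{1,\dots,J\}.
$$
The case $j=1$ is handled by Lucas' theorem: since $p-1$ is a single base-$p$ digit, $p \nmid \binom{m}{p-1}$ holds exactly when the last base-$p$ digit of $m=n+1$ equals $p-1$, that is, when $n \equiv -2 \pmod p$, i.e.\ $p \mid (n+2)$. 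Thus the clause ``$p \nmid (n+2)$'' in the definition of $\varepsilon_p$ is equivalent to ``$p \mid \binom{m}{p-1}$'', the $j=1$ case.

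The heart of the argument is to show that the top index $j=J$ contributes no new prime, so that testing $j=1,\dots,J-1$ already suffices. For this I will invoke the Hermite--Bachmann congruence
$$
  \sum_{j=1}^{J} \binom{m}{j(p-1)} \equiv 0 \pmod p,
$$
which I would derive by a power-sum computation over $\mathbb{F}_p$: since $\sum_{a \in \mathbb{F}_p^{\times}} a^k \equiv -1$ or $0 \pmod p$ according as $(p-1)\mid k$ or not, expanding $\sum_{a \in \mathbb{F}_p^{\times}} (1+a)^m$ both by the binomial theorem and via the substitution $b=1+a$ yields $\sum_{(p-1)\mid k,\,0\le k\le m} \binom{m}{k} \equiv 1$ or $2 \pmod p$ according as $(p-1)\nmid m$ or $(p-1)\mid m$; subtracting the boundary terms $\binom{m}{0}=1$ and, when $(p-1)\mid m$, also $\binom{m}{m}=1$, leaves the displayed congruence in both cases. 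Granting this, if $p \mid \binom{m}{j(p-1)}$ for all $j=1,\dots,J-1$, the congruence forces $p \mid \binom{m}{J(p-1)}$ as well. Combined with the $j=1$ translation, the condition defining $\varepsilon_p=0$ becomes equivalent to $p \mid \binom{m}{j(p-1)}$ for all $j=1,\dots,J$, i.e.\ to $p \notin \PSet_{n+1}$, establishing $\varepsilon_p = 1 \iff p \in \PSet_{n+1}$ and hence \eqref{EQ:prod-q_n-eps}.

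The main obstacle is the Hermite--Bachmann congruence, and in particular the careful bookkeeping of the boundary terms in the two cases $(p-1)\mid m$ and $(p-1)\nmid m$: these shift the largest relevant multiple of $p-1$ between $k=m$ and $k<m$, so that $J=\lfloor n/(p-1)\rfloor$ must be distinguished from $\lfloor m/(p-1)\rfloor$. Once that congruence is in hand, the redundancy of the top index $j=J$ --- the only gap between the $\PSet$-description and the stated range $j=2,\dots,J-1$ --- follows in a single line.
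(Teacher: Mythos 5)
Your proposal is correct and follows essentially the same route as the paper: reduce to the description $q_n=\prod_{p\in\PSet_{n+1}}p$ from \eqref{EQ:q_n-prod}, settle $p=2$ by Theorem~\ref{THM:main}(iii), translate the $j=1$ term into the condition $p\mid(n+2)$, and use the Hermite--Bachmann congruence to show the top index $j=\lfloor n/(p-1)\rfloor$ is redundant. The only (harmless) deviations are that you handle $j=1$ via Lucas's theorem where the paper uses an elementary residue-class count of the factors of $\binom{m}{p-1}$, and that you supply a proof of the Hermite--Bachmann congruence, which the paper simply cites.
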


For a refinement step in the proof of Theorem~\ref{THM:formula-eps}, we need
the following congruence. See Hermite \cite{hermite} for the case $m$ odd, and
Bachmann \cite[Eq.~(116), p.~46]{bach} for the general case. For a recent
elementary proof, see \cite{m&s}.

\begin{lemma}[Hermite, Bachmann]
If $p$ is a prime and $m \geq 1$, then
\begin{equation} \label{EQ:hermite}
  \sum_{1 \, \leq \, j \, \leq \, \frac{m-1}{p-1}}
  \binom{m}{j(p-1)} \equiv 0 \pmod{p}.
\end{equation}
\end{lemma}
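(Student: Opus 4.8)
The plan is to establish the congruence~\eqref{EQ:hermite} by a roots-of-unity filter performed modulo $p$, using the fact that the residues $1, 2, \dotsc, p-1$ are exactly the $(p-1)$st roots of unity modulo $p$. The whole argument rests on the elementary power-sum identity
\[
  \sum_{a=1}^{p-1} a^k \equiv
  \begin{cases}
    -1, & \text{if } (p-1) \mid k, \\
    \phantom{-}0, & \text{otherwise,}
  \end{cases}
  \pmod p
\]
valid for every integer $k \ge 0$ and proved, for instance, by summing the geometric progression $\sum_i g^{ik}$ generated by a primitive root $g$ modulo $p$.

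First I would introduce the fully filtered binomial sum
\[
  F \DefEq \sum_{\substack{0 \,\le\, k \,\le\, m \\ (p-1)\,\mid\,k}} \binom{m}{k}
\]
and evaluate $\sum_{a=1}^{p-1}(1+a)^m$ modulo $p$ in two different ways. Expanding by the binomial theorem and interchanging the order of summation gives
\[
  \sum_{a=1}^{p-1}(1+a)^m
  = \sum_{k=0}^{m} \binom{m}{k} \sum_{a=1}^{p-1} a^k
  \equiv -F \pmod p,
\]
since by the power-sum identity only those $k$ divisible by $p-1$ survive, each contributing the factor $-1$.

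The second evaluation comes from the substitution $b = 1+a$: as $a$ runs through $1, \dotsc, p-1$, the residue $b$ runs through every residue $0, 1, \dotsc, p-1$ except $b=1$. Hence, using $m \ge 1$ so that $0^m = 0$,
\[
  \sum_{a=1}^{p-1}(1+a)^m
  \equiv \Bigl( \sum_{b=0}^{p-1} b^m \Bigr) - 1
  \equiv
  \begin{cases}
    -2, & \text{if } (p-1) \mid m, \\
    -1, & \text{otherwise,}
  \end{cases}
  \pmod p,
\]
again by the power-sum identity. Comparing the two evaluations yields $F \equiv 2 \pmod p$ when $(p-1) \mid m$, and $F \equiv 1 \pmod p$ otherwise.

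It then remains to strip the boundary terms from $F$ in order to recover the sum on the left of~\eqref{EQ:hermite}, say $\Sigma$, which ranges only over $1 \le j \le \frac{m-1}{p-1}$; this bookkeeping is the sole delicate point. The index $k=0$ always contributes $\binom{m}{0}=1$ to $F$ but is excluded from $\Sigma$. If $(p-1) \nmid m$ there is no other boundary term, so $F = \Sigma + 1$ and $F \equiv 1$ gives $\Sigma \equiv 0$. If $(p-1) \mid m$, then $k=m$ is also a multiple of $p-1$ and contributes $\binom{m}{m}=1$; since it too lies outside the range $j \le \frac{m-1}{p-1}$, we get $F = \Sigma + 2$, and $F \equiv 2$ again forces $\Sigma \equiv 0 \pmod p$. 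In both cases~\eqref{EQ:hermite} follows. The main obstacle is exactly this matching of the full roots-of-unity sum with the half-open index range in the statement; all the arithmetic substance is packed into the power-sum identity.
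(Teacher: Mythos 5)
Your proof is correct. The paper itself does not prove this lemma at all: it is quoted with attribution to Hermite (for $m$ odd) and Bachmann (general case), with a pointer to the MacMillan--Sondow note \cite{m&s} for an elementary proof, so you have supplied a self-contained argument where the authors rely on a citation. Your route is the classical roots-of-unity filter: everything reduces to the congruence $\sum_{a=1}^{p-1} a^k \equiv -1$ or $0 \pmod p$ according as $(p-1) \mid k$ or not, applied once to expand $\sum_{a=1}^{p-1}(1+a)^m$ by the binomial theorem and once to evaluate the same sum after the shift $b = 1+a$; the bookkeeping of the boundary terms $k=0$ and (when $(p-1)\mid m$) $k=m$ is handled correctly, and the edge cases $p=2$, $m<p-1$, and $m=p-1$ (where the sum in \eqref{EQ:hermite} is empty) all check out. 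By contrast, the elementary proof the paper cites reaches the same congruence via Pascal's identity and induction, avoiding primitive roots; your version is shorter but leans on the power-sum identity, which itself needs either a primitive root or an equivalent input. Either way, the lemma is established and your argument could stand in place of the citation.
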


\begin{proof}[Proof of Theorem~\ref{THM:formula-eps}]
By Theorem~\ref{THM:main} we know that $q_n$ is the product of certain primes
$p \leq M_n$. Thus, to show \eqref{EQ:prod-q_n-eps}, we have to determine
the exponents $\varepsilon_p$.

If $p=2$, then the value of $\varepsilon_2$ is given by Theorem~\ref{THM:main}
part~(iii) for $n \geq 1$. Since $M_0 = 1$, the case $n=0$ does not occur here.
So we are done in case $p=2$.

We now consider the case of an odd prime $p\leq M_n$. Since $M_n \leq 2$ if
$n \leq 3$, we may fix $n \geq 4$. Set $m = n+1$ and recall the definition of
$\delta_m$ in \eqref{EQ:def-delta}. As in the proof of Theorem~\ref{THM:main}
and in relation~\eqref{EQ:q_n-prod}, we have that
$$
  p \nmid q_n
  \quad \iff \quad
  p \notin \PSet_{m}
$$
and by Lemma~\ref{LEM:pset_def} we obtain that
$$
  p \nmid q_n
  \quad \iff \quad
  p \notin \PSet_{m,k} \quad (k = 2, 4, \dotsc, m - \delta_m).
$$
Note that $p \nmid q_n$ is equivalent to $\varepsilon_p = 0$. Recall from
Lemma~\ref{LEM:pset_def} that $p \in \PSet_{m,k}$ if and only if $(p-1) \mid k$
and $p \nmid \binom{m}{k}$. As $m-\delta_m$ and $p-1$ are even, we have
$$
  L \DefEq \left\lfloor \frac{m - 1}{p-1} \right\rfloor
  = \left\lfloor \frac{m - \delta_m}{p-1} \right\rfloor.
$$
Substituting $k \mapsto j(p-1)$ for those $k$ with $(p-1) \mid k$,
we finally conclude that
\begin{equation} \label{EQ:cond-eps-0}
  \varepsilon_p = 0
  \quad \iff \quad
  p \mid \binom{m}{j(p-1)}
  \quad \left( j = 1, 2, \dotsc, L \right).
\end{equation}
We then infer by using \eqref{EQ:hermite} that
$$
  \sum_{1 \leq \, j \, < \, L} \binom{m}{j(p-1)} \equiv 0 \pmod{p}
  \quad \implies \quad
  \binom{m}{L(p-1)} \equiv 0 \pmod{p}.
$$
Thus, we may replace the last index $L$ with $L-1$ in \eqref{EQ:cond-eps-0}.
It remains to show in case $j=1$ that $p \mid \binom{m}{p-1}$ is equivalent to
$p \nmid (m+1)$. To see this, note first that
$$
  p \mid \binom{m}{p-1} \quad \iff \quad p \mid m(m-1) \dotsm (m-(p-2)).
$$
Now, since $m, m-1, \dotsc, m-(p-2),$ and $m+1$ represent the $p$ different
residue classes modulo~$p$, we deduce the desired equivalence.
This completes the proof of Theorem~\ref{THM:formula-eps}.
\end{proof}


\section{Proof of \texorpdfstring{Theorem~\ref{THM:formula}}{Theorem~2}} \label{SEC:proofs2}
Before giving the proof, we first introduce some notation and two preliminary
results.

For a prime $p$, the {\em $p$-adic valuation} of an integer $x > 0$, denoted by
$v_p(x)$, gives the exponent of the highest power of $p$ that divides $x$.
Any integer $x \geq 0$ can be written as a finite {\em $p$-adic expansion}
$$
  x = \alpha_0 + \alpha_1 \, p + \dotsb + \alpha_r \, p^r
$$
with some $r \geq 0$ and unique {\em base-$p$ digits} $\alpha_j$ satisfying
$0 \leq \alpha_j \leq p-1$ for $j = 0, 1, \dotsc, r$.
(In case $x > 0$, we assume that $\alpha_r > 0$, unless $r$ is prescribed,
when trailing zeros may occur.)
The sum of the digits of this expansion defines the function
$$
  s_p(x) \DefEq  \alpha_0 + \alpha_1 + \dotsb + \alpha_r.
$$
Note that $s_p(x)=0$ if and only if $x=0$. Comparing the two equations above,
one simply observes that
\begin{equation} \label{EQ:s_p-congr}
  s_p( x ) \equiv x \pmod{(p-1)}.
\end{equation}
A further property, proved by Legendre \cite[pp. 8--10]{legendre}
(see also \cite[p.~77]{moll}), is that
$$
  v_p( x! ) = \frac{x - s_p(x)}{p-1},
$$
also implying \eqref{EQ:s_p-congr} at once.
An easy application to binomial coefficients provides that
\begin{equation} \label{EQ:v_p-binom}
  v_p \left( \binom{m}{k} \right) = \frac{s_p(k) + s_p(m-k) - s_p(m)}{p-1}.
\end{equation}

We are ready now to prove our second main result.

\begin{proof}[Proof of Theorem~\ref{THM:formula}]
Fix $n \geq 0$ and set $m = n+1$. With the help of
Theorem~\ref{THM:formula-eps} and its proof, we will show that
\eqref{EQ:prod-q_n-eps} is equivalent to \eqref{EQ:prod-q_n}.
To do so, we have to show for all primes $p \leq M_n$ that
\begin{equation} \label{EQ:e_p-s_p}
  \varepsilon_p = 1 \quad \iff \quad s_p(m) \geq p.
\end{equation}

If $n < 2$, then $M_n=1$, and we are done. Now assume that $n \geq 2$, so that
$m \geq 3$. As in the proof of Theorem~\ref{THM:formula-eps}, we set
$$
  L \DefEq \left\lfloor \frac{m-1}{p-1} \right\rfloor.
$$

\textit{Case $\varepsilon_2$.}
Since $m \geq 3$, Theorem~\ref{THM:formula-eps} implies that $\varepsilon_2 = 0$
if and only if $m$ is a power of $2$. The latter is equivalent to $s_2(m) = 1$,
as well as to $m$ having only one digit equal to~$1$ in its binary expansion.
Thus if $\varepsilon_2 = 1$, then we must have $s_2(m) \geq 2$, and conversely.
This shows \eqref{EQ:e_p-s_p} for $p=2$.

\textit{Case $\varepsilon_p$ for odd $p \leq M_n$.} ``$\Rightarrow$'':
If $\varepsilon_p = 1$, then we deduce from \eqref{EQ:cond-eps-0} that there
exists a positive index $j \leq L$ such that
$$
  p \nmid \binom{m}{j(p-1)},
  \quad \text{that is}, \quad
  v_p \left( \binom{m}{j(p-1)} \right) = 0.
$$
Using \eqref{EQ:v_p-binom} we then obtain that
\begin{equation} \label{EQ:s_p-binom}
  s_p(m) = s_p(j(p-1)) + s_p(m-j(p-1)).
\end{equation}
As $j \geq 1$, we conclude by \eqref{EQ:s_p-congr} that
$s_p(j(p-1)) \geq p-1$. Since \mbox{$m > j(p-1)$} by $j \leq L$, we have
$s_p(m-j(p-1)) \geq 1$.
Applying these estimates to \eqref{EQ:s_p-binom}, we finally infer that
$s_p(m) \geq p$.

``$\Leftarrow$'':
We now suppose that $s_p(m) \geq p$. The bound $p \leq M_n$ implies that
\mbox{$m > p$}. Therefore, in the $p$-adic expansion
\begin{equation} \label{EQ:alpha_j}
  m = \alpha_0 + \alpha_1 \, p + \dotsb + \alpha_r \, p^r,
\end{equation}
we have $r \geq 1$ and $\alpha_r > 0$. Since $m > j(p-1)$ when $1 \leq j \leq L$,
the $p$-adic expansion
\begin{equation} \label{EQ:beta_j}
  j(p-1) = \beta_{j,0} + \beta_{j,1} \, p + \dotsb + \beta_{j,r} \, p^r
\end{equation}
has $\alpha_r \geq \beta_{j,r} \geq 0$ and the digits
$\beta_{j,0}, \dotsc, \beta_{j,r}$ cannot all be equal to the digits
$\alpha_0, \dotsc, \alpha_r$. By Lucas's theorem \cite[pp.~417--420]{lucas}
(for a modern proof, see \cite{fine} or \cite[p.~70]{moll}), we obtain
\begin{equation} \label{EQ:omega_j}
  \omega_j \DefEq
  \binom{m}{j(p-1)} \equiv
  \binom{\alpha_0}{\beta_{j,0}} \binom{\alpha_1}{\beta_{j,1}}
    \dotsm \binom{\alpha_r}{\beta_{j,r}} \pmod{p},
\end{equation}
using the convention that $\binom{\alpha}{\beta} = 0$ if $\alpha < \beta$.
We will deduce that $\omega_j \not\equiv 0 \pmod{p}$ for some index $j$.
To do so, we construct unique digits $\beta'_0, \dotsc, \beta'_r$, as follows.
(Remember that $r \geq 1$ and $\alpha_r \geq 1$.)
\begin{itemize}
\item Set $\beta'_r \DefEq \alpha_r - 1$.

\item Set $\beta'_k \DefEq \min \left( \alpha_k, \, (p-1)
      - \hspace*{-0.5em} \displaystyle\sum\limits_{\ell=k+1}^{r}
      \beta'_{\ell} \right)$
      iteratively for $k = r-1, r-2, \dotsc, 0$.
\end{itemize}

Roughly speaking, the digits $\beta'_k$ are ``filled up'' by the digits $\alpha_k$,
until the partial sum \mbox{$\beta'_{k+1} + \dotsb + \beta'_r$} reaches $p-1$;
the remaining $\beta'_k$ are then set equal to zero.

To explain this procedure in a more striking manner, imagine the following
picture. We take $p-1$ marbles, which we use to fill $r+1$  cups arranged in a
row and numbered $k=0, 1, \dotsc, r$. These cups, whose contents represent the
digits $\beta'_k$, are initially empty. (The actual procedure above omits this
step and iteratively sets each digit $\beta'_k$ to its final value.) We put
$\alpha_r - 1$ marbles into the cup with index $k=r$, while we fill the other
cups (successively having index $k=r-1, r-2, \dotsc, 0$) with up to
$\alpha_k$ marbles, if possible. We stop this process when we have used all the
marbles. (The actual procedure does not stop and sets all remaining $\beta'_k$
equal to zero.) In total, we have placed at most
$\alpha_r - 1 + \alpha_{r-1} + \dotsb + \alpha_0 = s_p(m)-1$,
but not exceeding $p-1$, marbles in the cups. Therefore, all $\beta'_k$ satisfy
$0 \leq \beta'_k \leq p-1$. It follows that if $s_p(m) \geq p$, then all $p-1$
marbles necessarily have been distributed over the cups.

Since $s_p(m) \geq p$, that is, $\alpha_0 + \dotsb + \alpha_r \geq p$,
we obtain the following properties:
\begin{enumerate}
\item $s_p(b) = p-1$, where
      $b \DefEq \beta'_0 + \beta'_1 \, p + \dotsb + \beta'_r \, p^r$.

\item $\alpha_k \geq \beta'_k$ for $k=0, 1, \dotsc, r-1$.

\item $\alpha_r > \beta'_r$.
\end{enumerate}
By using \eqref{EQ:s_p-congr}, property (i) implies that $(p-1) \mid b$.
From property (iii) and the expansion \eqref{EQ:alpha_j} we conclude that $b < m$.
Therefore, taking the index $j = b/(p-1)$, which satisfies $1 \leq j \leq L$, the digits
$\beta'_0, \dotsc, \beta'_r$ equal the digits $\beta_{j,0}, \dotsc, \beta_{j,r}$,
since $j(p-1) = b$, as used in \eqref{EQ:beta_j} and \eqref{EQ:omega_j}.
Furthermore, by properties (ii) and (iii), all binomial coefficients
$$
  \binom{\alpha_k}{\beta_{j,k}}
  = \binom{\alpha_k}{\beta'_k}
  \not\equiv 0 \pmod{p}
  \quad (k = 0, 1, \dotsc, r).
$$
Applying Lucas's theorem in \eqref{EQ:omega_j}, we finally achieve
$\omega_j \not\equiv 0 \pmod{p}$. By \eqref{EQ:cond-eps-0} this shows that
$\varepsilon_p = 1$.

All cases for $\varepsilon_2$ and $\varepsilon_p$ show that \eqref{EQ:e_p-s_p}
holds, proving the formula for $q_n$ in \eqref{EQ:prod-q_n}.

To complete the proof of Theorem~\ref{THM:formula}, it suffices to show that the
bound $M_n$ on the prime factors $p$ of $q_n$, given by Theorem~\ref{THM:main},
is sharp for infinitely many even (respectively, odd) values of $n$.
To see this, let $p$ be any odd prime and set $n_0 = 2p-2$. Then $n_0$ is even
and $M_{n_0}=p$. Since the $p$-adic expansion of $n_0$ is $n_0 = (p-2) + p$,
we have $s_p(n_0+1) = (p-1) + 1 = p$, ensuring that $p \mid q_{n_0}$.
A similar argument applied to odd $n_1 = 3p-2$ shows that $M_{n_1}=p$ and
$p \mid q_{n_1}$. Theorem~\ref{THM:formula} follows.
\end{proof}


\section{Applications}
The formulas for $q_n$ are intimately connected with the Bernoulli polynomials $B_n(x)$
by \eqref{EQ:def-S_n}. Therefore, we can reformulate Theorem~\ref{THM:formula}
in a way that describes the denominators of these polynomials.

\begin{theorem} \label{THM:formula-bernpoly}
For $n \geq 1$ let
$$
  \DD_n \DefEqDisp \denom \bigl( B_n(x) \bigr).
$$
The values $\DD_n$ have the following properties:
\begin{enumerate}
\item If $n=1$, then $\DD_n = 2$.

\item If $n \geq 3$ is odd, then
  $$
    \DD_n =
    \prod_{
      \substack{
        p \, \leq \, \frac{n+1}{2} \\[0.2em]
        s_p(n) \, \geq \, p}
    } p.
  $$

\item If $n \geq 2$ is even, then
  $$
    \DD_n =
  	\hspace{-0.2em}
  	\prod_{(p-1) \, \mid \, n}
  	\hspace{-0.2em} p
    \quad \times \quad
  	\hspace{-0.6em}
    \prod_{
      \substack{
        (p-1) \, \nmid \, n \\[0.2em]
        p \, \leq \, \frac{n+1}{3} \\[0.2em]
        s_p(n) \, \geq \, p}
    } \hspace{-0.2em} p.
  $$
\end{enumerate}
In particular, $\DD_n$ is even and squarefree for all $n \geq 1$,
and the sequence $(\DD_n)_{n \geq 1}$ is unbounded. Moreover,
\begin{equation} \label{EQ:bern-div-bernpoly}
  \denom \bigl( B_n \bigr) \mid \denom \bigl( B_n(x) \bigr) \quad (n \geq 1).
\end{equation}
\end{theorem}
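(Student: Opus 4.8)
The plan is to reduce the denominators of the Bernoulli polynomials to the quotients $q_n$ by means of Bernoulli's formula \eqref{EQ:def-S_n}, and then read off the three cases directly from Theorem~\ref{THM:formula} together with the von Staudt--Clausen theorem \eqref{EQ:denom-B_n}. The case $n=1$ is immediate, since $B_1(x)=x-\tfrac12$ gives $\DD_1=2$. So I would assume $n\ge2$ and rewrite \eqref{EQ:def-S_n} (with $n$ replaced by $n-1$) as $B_n(x)=n\,S_{n-1}(x)+B_n$. The central claim I would establish is the identity
\begin{equation*}
  \DD_n = \lcm\bigl(q_{n-1},\,\denom(B_n)\bigr).
\end{equation*}

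To prove this identity, I would first use the Lemma on the first page together with Theorem~\ref{THM:main}(ii) to write $S_{n-1}(x)=\tfrac{1}{d_{n-1}}\,p_{n-1}(x)$, where $p_{n-1}(x)\in\Z[x]$ has coprime coefficients and $d_{n-1}=n\,q_{n-1}$. Multiplying by $n$ cancels the factor $n$, so $n\,S_{n-1}(x)=\tfrac{1}{q_{n-1}}\,p_{n-1}(x)$, whence $\denom\bigl(n\,S_{n-1}(x)\bigr)=q_{n-1}$. By Lemma~\ref{LEM:poly-T_n} the polynomial $S_{n-1}(x)$ is divisible by $x$, so $p_{n-1}(x)$ has zero constant term; consequently $n\,S_{n-1}(x)$ has zero constant term, and its non-constant coefficients alone carry the full denominator $q_{n-1}$. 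Adding the constant $B_n$ therefore alters only the constant coefficient, from $0$ to $B_n$, so that
\begin{equation*}
  \DD_n=\denom\bigl(n\,S_{n-1}(x)+B_n\bigr)=\lcm\bigl(q_{n-1},\denom(B_n)\bigr),
\end{equation*}
as claimed.

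With the identity in hand, the three parts follow by substitution. For odd $n\ge3$ one has $B_n=0$, hence $\denom(B_n)=1$ and $\DD_n=q_{n-1}$; since $n-1$ is even, $M_{n-1}=\tfrac{n+1}{2}$ and $s_p\bigl((n-1)+1\bigr)=s_p(n)$, so Theorem~\ref{THM:formula} yields exactly the product in (ii). For even $n\ge2$ I would use that both $q_{n-1}$ and $\denom(B_n)$ are squarefree products of primes, so that their lcm is the product over the \emph{union} of the two prime sets. Here $\denom(B_n)=\prod_{(p-1)\mid n}p$, while $q_{n-1}$ ranges over the primes $p\le\tfrac{n+1}{3}$ with $s_p(n)\ge p$ (using $M_{n-1}=\tfrac{n+1}{3}$). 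Writing this union as the disjoint union of $\{p:(p-1)\mid n\}$ with those primes of $q_{n-1}$ for which $(p-1)\nmid n$ gives precisely the two-factor formula in (iii).

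It remains to verify the concluding assertions. Squarefreeness of $\DD_n$ is clear, being an lcm of the squarefree numbers $q_{n-1}$ and $\denom(B_n)$. For evenness: if $n$ is even then $2\mid\denom(B_n)$ because $(2-1)\mid n$, while if $n\ge3$ is odd then $\DD_n=q_{n-1}$ with $n-1$ even, and $q_{n-1}$ is even by Theorem~\ref{THM:main}(iii), since $q_{n-1}$ could be odd only for $n=2^r$, impossible for odd $n\ge3$. Unboundedness follows from the sharpness part of Theorem~\ref{THM:formula}: the even values $n_0=2p-2$ make $q_{n_0}$ arbitrarily large, so $\DD_{n_0+1}=q_{n_0}$ is unbounded along odd indices. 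Finally, \eqref{EQ:bern-div-bernpoly} is immediate from the identity, since $\denom(B_n)$ is one of the two arguments of the lcm. The step I expect to demand the most care is the derivation of the lcm identity --- specifically, confirming that the shift by $B_n$ touches only the constant term and that the non-constant coefficients already account for the entire denominator $q_{n-1}$; the subsequent case analysis is then routine bookkeeping with unions of prime sets.
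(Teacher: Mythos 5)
Your proposal is correct and follows essentially the same route as the paper: both reduce $\DD_n$ to $\lcm\bigl(q_{n-1},\denom(B_n)\bigr)$ via Bernoulli's formula and the fact that $B_n(x)-B_n$ has zero constant term, then split the odd case ($B_n=0$) from the even case (von Staudt--Clausen) and invoke Theorem~\ref{THM:formula}. The only cosmetic difference is that you justify $\denom\bigl(n\,S_{n-1}(x)\bigr)=q_{n-1}$ explicitly through the coprime-coefficients lemma, where the paper cites \eqref{EQ:def-S_n} and \eqref{EQ:denom-S_n} directly.
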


The first few values of $\DD_n$ (see \cite[Sequence A144845]{oeis}) are
$$
  \DD_n = 2, 6, 2, 30, 6, 42, 6, 30, 10, 66, 6, 2730, 210, 30, 6, 510, 30, 3990, 210, \dotsc.
$$

\begin{proof}[Proof of Theorem~\ref{THM:formula-bernpoly}]
As a result of Theorem~\ref{THM:main}, we obtain by \eqref{EQ:def-S_n} and
\eqref{EQ:denom-S_n} that
\begin{equation} \label{EQ:denom-bernpoly}
  q_{n-1} = \denom \bigl( B_n(x) - B_n \bigr) \quad (n \geq 1).
\end{equation}

(i).
Since $B_1(x) = x - \tfrac{1}{2}$ by \eqref{EQ:def-B_nx} and \eqref{EQ:B0B1},
we get $\DD_1 = 2$, which is even and squarefree. Then from $B_1 = -\tfrac{1}{2}$
by \eqref{EQ:B0B1}, relation~\eqref{EQ:bern-div-bernpoly} holds for $n=1$.
This shows~(i).

(ii).
Let $n \geq 3$ be odd. Then $B_n = 0$, so by \eqref{EQ:denom-bernpoly} we obtain
$$
  \DD_n  = \denom \bigl( B_n(x) \bigr) = q_{n-1}.
$$
The squarefree product formula for $\DD_n$ follows by applying
Theorem~\ref{THM:formula} to $q_{n-1}$. Since $n \geq 3$ is odd, $q_{n-1}$ must
be even by Theorem~\ref{THM:main} part~(iii). As $\denom(B_n) = 1$ in this case,
\eqref{EQ:bern-div-bernpoly} trivially holds. This shows (ii).

(iii).
Let $n \geq 2$ be even. Recall the von Staudt--Clausen theorem in
\eqref{EQ:denom-B_n}, namely,
\begin{align} \label{EQ:D_n}
  D_n \DefEqDisp \denom( B_n ) =
	\hspace{-0.3em}
	\prod_{(p-1) \, \mid \, n}
	\hspace{-0.3em} p.
\end{align}
Since $B_n(x) - B_n$ has no constant term (see the proof of
Lemma~\ref{LEM:poly-T_n}), we deduce that
\begin{align*}
  \DD_n &= \denom \bigl( (B_n(x) - B_n) + B_n \bigr) \\
    &= \lcm \bigl( \denom( B_n(x) - B_n ), \denom( B_n ) \bigr) \\
    &= \lcm( q_{n-1}, D_n ).
\end{align*}
Thus, $D_n \mid \DD_n$, which shows \eqref{EQ:bern-div-bernpoly}.
As $2 \mid D_n$ by \eqref{EQ:D_n}, we then infer that $\DD_n$ is even. Since
$D_n$ and $q_{n-1}$ are  squarefree, so is $\DD_n$. Finally, we get
$$
  \DD_n = D_n \times \frac{q_{n-1}}{\gcd(q_{n-1},D_n)},
$$
where the second factor does not include primes that divide $D_n$.
By Theorem~\ref{THM:formula}, the result follows. This proves (iii).

It remains to show that the sequence $(\DD_n)_{n \geq 1}$ is unbounded.
Since parts~(ii) and~(iii) imply that $q_{n-1} \mid \DD_n$ for $n \geq 2$,
Theorem~\ref{THM:formula} gives the result again.
\end{proof}

To put Theorem~\ref{THM:formula-bernpoly} in the context of known results,
we note a special property of the values of the Bernoulli polynomials at
rational arguments, namely,
\begin{equation} \label{EQ:bernpoly-Z}
  k^n \left( B_n \left( \frac{h}{k} \right) - B_n \right) \in \Z
    \quad (k \in \N, \, h \in \Z).
\end{equation}
This result is due to Almkvist and Meurman \cite{am}; for a different proof,
see \cite[pp.~70--71]{cohen}. As a complement, from \eqref{EQ:denom-bernpoly}
we have
\begin{equation} \label{EQ:bernpoly-Zx}
  q_{n-1} \bigl( B_n(x) - B_n \bigr) \in \Z[x].
\end{equation}
We argue that relations \eqref{EQ:bernpoly-Z} and \eqref{EQ:bernpoly-Zx} are
independent. On the one hand, \eqref{EQ:bernpoly-Zx} at once implies
\eqref{EQ:bernpoly-Z} but with an extra factor $q_{n-1}$. To see this, set
$x = h/k$ in \eqref{EQ:bernpoly-Zx}, multiply by $k^n$, and recall that $B_n(x)$
is a polynomial of degree $n$.
On the other hand, $\eqref{EQ:bernpoly-Z}$ holds when $k$ is any prime,
whether or not it divides the denominator $q_{n-1}$ in \eqref{EQ:bernpoly-Zx}.

It is an astonishing fact that the denominators of $S_n(x)$ and $B_n(x) - B_n$
can be easily computed, without knowledge of the Bernoulli numbers, from the
formulas in Theorems~\ref{THM:main} and~\ref{THM:formula}, giving a link to
$p$-adic theory via the function $s_p(n)$.
By contrast, the formula for the denominator of $B_n(x)$ in
Theorem~\ref{THM:formula-bernpoly} part~(iii) is more complicated, being
separated into two products and requiring the von Staudt--Clausen theorem.

It is quite remarkable that surprising new properties of the power sum $S_n(x)$
are still being revealed four centuries after 1614. Already in that year,
in his book {\em Newer Arithmetischer Wegweyser} \cite{faul1614}, Faulhaber
published formulas he had initially found up to $n=7$ (see \cite{beery, schneider1983}),
extending the classical formulas for $n=1,2,3,4$.


\section*{Acknowledgment}
The authors thank Kieren MacMillan for suggestions on an early version of the paper.
We are grateful to the Archive of the City of Ulm for its kind support in
providing a digital copy of Sebastian Furck's portrait of Johann Faulhaber
\cite{furck} in Figure~\ref{FIG:Faulhaber}, with permission to reprint it.
We also appreciate the valuable suggestions of the referees which improved the paper.


\end{document}